\newtheorem{theorem}{Theorem}[section] 
\newtheorem{lemma}[theorem]{Lemma}   
\newtheorem{proposition}[theorem]{Proposition}
\newtheorem{main-theorem}{Theorem}
\newtheorem*{problem*}{Problem}
\theoremstyle{definition}
\newtheorem*{question*}{Question}
\renewcommand{\mod}{\operatorname{mod}}
\newcommand{\End}{\operatorname{End}}
\newcommand{\Hom}{\operatorname{Hom}}
\newcommand{\T}{\operatorname{T}}
\newcommand{\op}{\operatorname{op}}
\newcommand{\add}{\operatorname{add}}
\newcommand{\bR}{\mathbb{R}}
\newcommand{\bZ}{\mathbb{Z}}
\newcommand{\vect}[1]{%
  \vbox{\m@th \ialign {##\crcr
  \vectfill\crcr\noalign{\kern-\p@ \nointerlineskip}
  $\hfil\displaystyle{#1}\hfil$\crcr}}}
\def\vectfill{%
  $\m@th\smash-\mkern-7mu%
  \cleaders\hbox{$\mkern-2mu\smash-\mkern-2mu$}\hfill
  \mkern-7mu\raisebox{-3.81pt}[\p@][\p@]{$\mathord\mathchar"017E$}$}
\newcommand{\amsvect}{%
  \mathpalette {\overarrow@\vectfill@}}
\def\vectfill@{\arrowfill@\relbar\relbar{\raisebox{-3.81pt}[\p@][\p@]{$\mathord\mathchar"017E$}}}
\newcommand{\amsvectb}{%
  \mathpalette {\overarrow@\vectfillb@}}
\newcommand{\vecbar}{%
%  \scalebox{0.65}{$\relbar$}}
%\def\vectfillb@{\arrowfill@\vecbar\vecbar{\raisebox{-4.35pt}[\p@][\p@]{$\mathord\mathchar"017E$}}}
%  \scalebox{0.8}{$\relbar$}}
%  \scalebox{0.65}{$\relbar$}}
%\def\vectfillb@{\arrowfill@\vecbar\vecbar{\raisebox{-4.35pt}[\p@][\p@]{$\mathord\mathchar"017E$}}}
  \scalebox{0.8}{$\relbar$}}
\def\vectfillb@{\arrowfill@\vecbar\vecbar{\raisebox{-4pt}[\p@][\p@]{$\mathord\mathchar"017E$}}}
\newcommand{\tikzAngleOfLine}{\tikz@AngleOfLine}
\def\tikz@AngleOfLine(#1)(#2)#3{%
\pgfmathanglebetweenpoints{%
\pgfpointanchor{#1}{center}}{%
\pgfpointanchor{#2}{center}}
\pgfmathsetmacro{#3}{\pgfmathresult}%
}
\begin{document}

\title{Higher spherical algebras}

%{\def\thefootnote{}
%\footnote{The research was supported by the program
%``Research in Pairs'' by the 
%Mathematisches Forschungsinstitut Oberwolfach in 2018, and also by the
%	Faculty of Mathematics and Computer Science of the Nicolaus Copernicus University in Toru\'{n}.}
%}

\author[K. Edrmann]{Karin Erdmann}
\address[Karin Erdmann]{Mathematical Institute,
   Oxford University,
   ROQ, Oxford OX2 6GG,
   United Kingdom}
\email{erdmann@maths.ox.ac.uk}

\author[A. Skowro\'nski]{Andrzej Skowro\'nski}
\address[Andrzej Skowro\'nski]{Faculty of Mathematics and Computer Science,
   Nicolaus Copernicus University,
   Chopina~12/18,
   87-100 Toru\'n,
   Poland}
\email{skowron@mat.uni.torun.pl}

\begin{abstract}
We introduce and study  higher spherical algebras,
an exotic family of finite-dimensional algebras over an
algebraically closed field.
We prove that every such an algebra
is derived equivalent to a higher tetrahedral algebra
studied in \cite{ES3}, and
hence that it is a tame symmetric periodic algebra
of period $4$.
%
%\bigskip
%
%\noindent
%\textit{Keywords:}
%Syzygy, Periodic algebra, Self-injective algebra, Symmetric algebra, 
%Surface algebra, Tame algebra
% 
%\noindent
%\textit{2010 MSC:}
%16D50, 16E30, 16G20, 16G60, 16G70
%
%\subjclass[2010]{16D50, 16E30, 16G20, 16G60, 16G70}
\end{abstract}

%\linenumbers

\maketitle

%
%\begin{center}
%\vspace*{-5mm}
%\textit{Dedicated to ?? ?? on the occasion of his/her ??th birthday}
%\vspace*{5mm}
%\end{center}
%

\vspace*{-2cm}

\section{Introduction and main results}\label{sec:intro}

Throughout this paper, $K$ will denote a fixed algebraically closed field.
By an algebra we mean an associative finite-dimensional $K$-algebra
with an identity.
For an algebra $A$, we denote by $\mod A$ the category of
finite-dimensional right $A$-modules and by $D$ the standard
duality $\Hom_K(-,K)$ on $\mod A$.
An algebra $A$ is called \emph{self-injective}
if $A_A$ is injective in $\mod A$, or equivalently,
the projective modules in $\mod A$ are injective.
A prominent class of self-injective algebras is formed
by the \emph{symmetric algebras} $A$ for which there exists
an associative, non-degenerate symmetric $K$-bilinear form
$(-,-): A \times A \to K$.
Classical examples of symmetric algebras are provided
by the blocks of group algebras of finite groups and
the Hecke algebras of finite Coxeter groups.
In fact, any algebra $A$ is a quotient algebra
of its trivial extension algebra $\T(A) = A \ltimes D(A)$,
which is a symmetric algebra.

For an algebra $A$, the module category $\mod A^e$ 
of its enveloping algebra $A^e = A^{\op} \otimes_K A$
is the category of finite-dimensional $A$-$A$-bimodules.
We denote by $\Omega_{A^e}$ the syzygy operator in $\mod A^e$
which assigns to a module $M$ in $\mod A^e$  the kernel 
$\Omega_{A^e}(M)$ of a minimal projective cover of $M$ in $\mod A^e$.
An algebra $A$ is called \emph{periodic}
if $\Omega_{A^e}^n(A) \cong A$ in $\mod A^e$ for some $n \geq 1$,
and if so the minimal such $n$ is called the period of $A$.
Periodic algebras are self-injective and have periodic
Hochschild cohomology.

Finding or possibly classifying periodic algebras
is an important problem. It is very interesting because of
connections
with group theory, topology, singularity theory
and cluster algebras.

We are concerned with the classification of all periodic tame
symmetric algebras.
In \cite{Du} Dugas proved that every representation-finite
self-injective algebra, without simple blocks, is a periodic algebra.
The representation-infinite, indecomposable,
periodic algebras of polynomial growth were classified
by Bia\l kowski, Erdmann and Skowro\'nski in \cite{BES}.
It is conjuctered in \cite[Problem]{ES2} that every indecomposable
symmetric periodic tame algebra of non-polynomial growth 
is of period $4$.
Prominent classes of tame symmetric algebras of period $4$
are provided by the weighted surface algebras and their
deformations investigated in
\cite{ES2},
\cite{ES3},
\cite{ES4},
\cite{ES5}.

In this article we introduce and study  higher spherical algebras,
which are  ``higher analogs'' of the non-singular spherical algebras
introduced in \cite{ES5},
and provide a new exotic family of tame symmetric periodic
algebras of period $4$.

Let $m \geq1$ be a natural number and  $\lambda \in K^*$.
We denote by $S(m,\lambda)$
the algebra given by the quiver $\Delta$
of the form
\[
%  \xymatrix@R=2pc@C=1.5pc{
%  \xymatrix@R=3.5pc@C=1.8pc{
  \xymatrix@R=3.pc@C=1.2pc{
%  \xymatrix@C=.8pc{
    &&& 1
    \ar[ld]^{\alpha}
    \ar[rrrd]^{\varrho}
    \\   
    5
    \ar[rrru]^{\delta}
%    \ar@<-.5ex>[rr]_(.6){\eta}
    && 2
%    \ar@<-.5ex>[ll]_(.4){\xi}
    \ar[rd]^{\beta}
    && 4
    \ar[lu]^{\sigma}
%    \ar@<-.5ex>[rr]_(.4){\mu}
    && 6
%    \ar@<-.5ex>[ll]_(.6){\varepsilon}
    \ar[llld]^{\omega}
    \\
   &&& 3
    \ar[lllu]^{\nu}
    \ar[ur]^{\gamma}
  }
\]
and the relations:
\begin{align*}
  \beta \nu \delta &= \beta \gamma \sigma
     + \lambda (\beta \gamma \sigma \alpha)^{m-1}  \beta \gamma \sigma ,
  &
  \alpha \beta \nu &= \varrho \omega \nu ,
\\
  \nu \delta \alpha &= \gamma \sigma \alpha
     + \lambda (\gamma \sigma \alpha \beta)^{m-1} \gamma \sigma \alpha ,
  &
  \delta \alpha \beta &=  \delta \varrho \omega ,
\\
  \sigma \varrho \omega &= \sigma \alpha \beta
    + \lambda (\sigma \alpha \beta \gamma)^{m-1} \sigma \alpha \beta ,
  &
  \omega \gamma \sigma &= \omega \nu \delta ,
\\
  \varrho \omega \gamma &= \alpha \beta \gamma
    + \lambda (\alpha \beta \gamma \sigma)^{m-1} \alpha \beta \gamma ,
  &
  \gamma \sigma \varrho &=  \nu \delta \varrho ,
\\
  &(\alpha \beta \gamma \sigma)^m \alpha = 0 ,
  &
  (\gamma \sigma \alpha \beta)^m \gamma &=  0 .
\end{align*}
We call $S(m,\lambda)$ with $m \geq2$
a \emph{higher spherical algebra}.
For  $m=1$, this 
is the non-singular spherical algebra $S(1+\lambda)$
investigated in \cite[Section~3]{ES5}.
The above quiver is its Gabriel quiver, 
and $S(1+\lambda)$ is a surface algebra (in the sense of \cite{ES5})
given by the following triangulation of the sphere $S^2$ in $\bR^3$
\[
\begin{tikzpicture}[xscale=2,yscale=1.5,auto]
\coordinate (o) at (0,0);
\coordinate (a) at (0,1);
\coordinate (b) at (-.5,0);
\coordinate (c) at (.5,0);
\coordinate (d) at (0,-1);
\coordinate (l) at (-1,0);
\coordinate (r) at (1,0);
\draw (a) to node {4} (c)
(c) to node {6} (d)
(d) to node {5} (b)
(b) to node {2} (a);
\draw (d) to node {3} (a);
\node (a) at (a) {$\bullet$};
\node (b) at (b) {$\bullet$};
\node (c) at (c) {$\bullet$};
\node (d) at (d) {$\bullet$};
\draw (l) arc (-180:180:1) node [left] {$1$};
\draw (r) node [right] {$1$};
\end{tikzpicture}
\]
with the coherent orientation of triangles:
\mbox{(1 2 5)}, 
\mbox{(2 3 5)}, 
\mbox{(3 4 6)},
\mbox{(4 1 6)}.
We note that the non-singular spherical algebras in  \cite{ES5} appear 
since in the general setting for weighted surface
algebras we allow `virtual' arrows. 

The following two theorems describe basic properties
of higher spherical algebras.

\begin{main-theorem}
\label{th:main1}
Let $S = S(m,\lambda)$ be a higher spherical algebra.
Then $S$ is a finite-dimensional algebra with 
$\dim_K S = 36m + 4$.
\end{main-theorem}

\begin{main-theorem}
\label{th:main2}
Let $S = S(m,\lambda)$ be a higher spherical algebra.
Then the following statements hold:
\begin{enumerate}[(i)]
 \item
  $S$ is a symmetric algebra.
 \item
  $S$ is a periodic algebra of period $4$.
 \item
  $S$ is a tame algebra of non-polynomial growth.
\end{enumerate}
\end{main-theorem}

It follows from the above theorems that 
the higher spherical algebras $S(m,\lambda)$,
$m \geq2$,  $\lambda \in K^*$, form
an exotic family of algebras of generalized quaternion type
(in the sense of \cite{ES4})
whose Gabriel quiver is not $2$-regular.
The classification of the Morita equivalence classes of all
algebras of generalized quaternion type with $2$-regular
Gabriel quivers having at least three vertices has been 
established in \cite[Main~Theorem]{ES4}.
During the work on this, surprisingly, we discovered 
new algebras, which we call higher tetrahedral algebras
$\Lambda(m,\lambda)$, $m \geq2$,  $\lambda \in K^*$, 
They are  introduced and  studied in \cite{ES3}
%(see Section~3 for definition and properties),
(see Section~\ref{sec:hta} for definition and properties).

The following theorem relates these two classes of algebras.

\begin{main-theorem}
\label{th:main3}
Let $m \geq2$ be a natural number and  $\lambda \in K^*$.
Then the algebras $S(m,\lambda)$ and $\Lambda(m,\lambda)$ 
are derived equivalent.
\end{main-theorem}

Then Theorem~\ref{th:main2} is the consequence of 
Theorem~\ref{th:main3}, by applying general theory as described in 
Theorems \ref{th:2.3}, \ref{th:2.4}, \ref{th:2.5},
and Theorem~\ref{th:3.1}.

For general background on the relevant representation theory
we refer to the books
%\cite{ASS,Ha,SS,SY}.
\cite{ASS},
\cite{Ha},
\cite{SS},
\cite{SY}.

\section{Derived equivalences}
\label{sec:deriveq}

In this section we collect some facts on derived equivalences
of algebras which are needed in the proofs of
Theorems \ref{th:main2} and \ref{th:main3}.

Let $A$ be an algebra over $K$.
We denote by $D^b(\mod A)$ the derived category of $\mod A$,
which is  the localization of the homotopy category $K^b(\mod A)$
of bounded complexes of modules from $\mod A$ with respect
to quasi-isomorphisms.
Moreover, let $K^b(P_A)$ be the subcategory of $K^b(\mod A)$
given by the complexes of projective modules in $\mod A$.
Two algebras $A$ and $B$ are called \emph{derived equivalent}
if the derived categories 
$D^b(\mod A)$ and $D^b(\mod B)$
are equivalent as triangulated categories. The triangulated
structure is induced by shift in degrees of complexes. 
Following J.~Rickard \cite{Ric1}, 
a complex $T$ in  $K^b(P_A)$
is called a \emph{tilting complex} if the following properties are satisfied:
\begin{enumerate}[(1)]
 \item
  $\Hom_{K^b(P_A)}(T,T[i]) = 0$ for all $i \neq 0$ in $\bZ$,
 \item
  the full subcategory $\add(T)$ of $K^b(P_A)$ consisting
  of direct summands of direct sums of copies of $T$
  generates $K^b(P_A)$ as a triangulated category.
\end{enumerate}
Here, $[\,]$ denotes the translation functor by shifting any complex
one degree to the left.

The following theorem is due to J.~Rickard \cite[Theorem~6.4]{Ric1}.

\begin{theorem}
\label{th:2.1}
Two algebras $A$ and $B$ are derived equivalent if and only if
there is a tilting complex $T$ in $K^b(P_A)$ such that
$B \cong \End_{K^b(P_A)}(T)$.
\end{theorem}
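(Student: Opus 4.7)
The plan is to split the biconditional and handle each direction separately. The forward direction is a formal transport-of-structure along the given equivalence, while the reverse direction contains the substantive content of Rickard's theorem.

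For the forward direction, assume a triangulated equivalence $F\colon D^b(\mod B)\to D^b(\mod A)$ exists. I set $T:=F(B_B)$, viewing $B_B$ as a stalk complex concentrated in degree zero, and verify each clause of the definition of tilting complex. The identification of $K^b(P_A)$ with the subcategory of perfect (compact) complexes in $D^b(\mod A)$ is intrinsic to the triangulated category, hence preserved by $F$; since $B_B\in K^b(P_B)$ this gives $T\in K^b(P_A)$. The Ext-orthogonality reduces, via full faithfulness of $F$, to $\Hom_{D^b(\mod B)}(B,B[i])=\Ext^i_B(B,B)=0$ for $i\ne 0$. The generation clause transports because $\add(B_B)$ generates $K^b(P_B)$ as a triangulated category, via projective resolutions of arbitrary modules. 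Finally $\End_{K^b(P_A)}(T)\cong \End_{D^b(\mod B)}(B_B)\cong B$, again from $F$ being fully faithful.

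For the reverse direction, given $T\in K^b(P_A)$ tilting with $\End_{K^b(P_A)}(T)\cong B$, I construct a derived equivalence carrying $B_B$ to $T$ as follows:
\begin{enumerate}[(a)]
\item Upgrade $T$ to a complex $\widetilde T$ of $B$-$A$-bimodules whose underlying complex of right $A$-modules represents $T$ in $D^b(\mod A)$, and whose left $B$-action realizes the given isomorphism $B\cong\End_{K^b(P_A)}(T)$ at the chain level.
\item Define $F:=-\otimes^{\mathbf{L}}_B \widetilde T\colon D^b(\mod B)\to D^b(\mod A)$, so that $F(B_B)\cong T$ by construction.
\item Show $F$ is fully faithful by a d\'evissage: the full subcategory of objects $X\in D^b(\mod B)$ for which $\Hom(X,Y[i])\to\Hom(F(X),F(Y)[i])$ is bijective for every $Y$ and $i$ contains $B_B$ (by the Ext-orthogonality of $T$ and the identification of endomorphisms), is stable under shifts, triangles and direct summands, and hence coincides with $D^b(\mod B)$ since $B_B$ generates.
\item Show $F$ is essentially surjective: its essential image is a triangulated subcategory closed under summands that contains $T$, so by the generation clause for $T$ it equals $K^b(P_A)\cong D^b(\mod A)$.
\end{enumerate}

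The main obstacle is step (a). The isomorphism $B\cong\End_{K^b(P_A)}(T)$ only provides a ring homomorphism into the endomorphism ring of $T$ \emph{as an object of a triangulated category}, where morphisms are homotopy classes; it is not \emph{a priori} realized by honest chain maps respecting the differential, so one cannot directly promote $T$ to a bimodule complex. Producing $\widetilde T$ requires either a DG-enhancement of $D^b(\mod A)$ (where one lifts the homotopy ring action through the DG endomorphism algebra of a cofibrant replacement of $T$) or Rickard's original inductive rectification, which builds $\widetilde T$ stage by stage and uses the vanishing $\Hom_{K^b(P_A)}(T,T[i])=0$ for $i\ne 0$ to kill the obstructions to extending the action at each step. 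Once $\widetilde T$ is in hand, steps (b)--(d) are a routine triangulated d\'evissage using only the defining properties of a tilting complex, with no further algebraic input needed.
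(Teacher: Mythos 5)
The paper does not prove this statement at all: Theorem~\ref{th:2.1} is quoted verbatim from Rickard (\cite[Theorem~6.4]{Ric1}) and used as a black box, so there is no internal proof to compare against. Your outline is a faithful sketch of the standard argument: the forward direction by transporting $B_B$ along the equivalence is correct (with the small caveat that the intrinsic characterization of $K^b(P_A)$ inside $D^b(\mod A)$ is by homological finiteness --- $\Hom(X,Y[i])=0$ for $|i|\gg 0$ for all $Y$ --- rather than by compactness, which is the right notion only in the unbounded derived category of all modules). However, as a self-contained proof the proposal has two genuine gaps. First, step (a) is not a technical preliminary but the entire mathematical content of the theorem, and you do not prove it: you correctly identify that the homotopy-class action of $B$ on $T$ must be rectified to a chain-level bimodule structure and that the vanishing $\Hom_{K^b(P_A)}(T,T[i])=0$ kills the obstructions, but you then defer to ``Rickard's original inductive rectification'' or a DG enhancement --- i.e., to the very source the paper cites. (Rickard's own route in \cite{Ric1} in fact avoids constructing a bimodule complex altogether and builds the equivalence directly on $K^{-,b}(P_A)$; the two-sided tilting complex only appears in \cite{Ric3}.)

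Second, step (d) contains an error that matters precisely for the algebras in this paper: you conclude essential surjectivity from ``the essential image contains the thick subcategory generated by $T$, which equals $K^b(P_A)\cong D^b(\mod A)$.'' The identification $K^b(P_A)\simeq D^b(\mod A)$ holds only when $A$ has finite global dimension; for the symmetric (hence self-injective, non-semisimple) algebras $\Lambda(m,\lambda)$ and $S(m,\lambda)$ it fails, as $K^b(P_A)$ is a proper thick subcategory of $D^b(\mod A)$. The generation clause for $T$ only gives you $K^b(P_A)$ in the image; to reach all of $D^b(\mod A)$ one must either first establish the equivalence $K^b(P_B)\simeq K^b(P_A)$ and then extend it to $D^b\simeq K^{-,b}(P)$ by a limiting/truncation argument (Rickard's route), or argue directly that $-\otimes^{\mathbf L}_B\widetilde T$ hits every bounded complex by resolving. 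As written, your d\'evissage proves fully faithfulness but not essential surjectivity.
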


We will need the following special case of an alternating
sum formula established by D.~Happel in 
\cite[Sections III.1.3 and III.1.4]{Ha}.

\begin{proposition}
\label{prop:2.2}
Let $A$ be an algebra and
$Q = (Q^r)_{r\in\bZ}$,
$R = (R^s)_{s\in\bZ}$
two complexes in $K^b(P_A)$ such that
$\Hom_{K^b(P_A)}(Q,R[i]) = 0$ for any $i \neq 0$ in $\bZ$.
Then
\[
   \dim_K \Hom_{K^b(P_A)}(Q,R) =  \sum_{r,s}(-1)^{r-s} \dim_K \Hom_{A}(Q^r,R^s) . 
\]
\end{proposition}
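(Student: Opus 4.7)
The plan is to realize both sides of the identity as the Euler characteristic of the total $\Hom$-complex and then invoke the Euler--Poincar\'e principle.

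First I would pass from the homotopy category to cohomology of a complex of vector spaces. For bounded complexes of projective modules one has the well-known identification
\[
\Hom_{K^b(P_A)}(Q, R[n]) \;\cong\; H^{n}\bigl(\Hom^{\bullet}(Q,R)\bigr),
\]
where $\Hom^{\bullet}(Q,R)$ is the total $\Hom$-complex defined in each degree $n$ by
\[
\Hom^{\bullet}(Q,R)^{n} \;=\; \bigoplus_{r \in \bZ} \Hom_A(Q^{r}, R^{r+n}),
\]
with the standard differential $d(f) = d_R \circ f - (-1)^{n} f \circ d_Q$. Since $Q$ and $R$ are bounded and each term is a finite-dimensional projective over a finite-dimensional algebra, every space $\Hom_A(Q^{r}, R^{s})$ is finite-dimensional and only finitely many of them are non-zero, so $\Hom^{\bullet}(Q,R)$ is a bounded complex of finite-dimensional vector spaces.

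Next I would apply the Euler--Poincar\'e principle to this complex. On the one hand, the hypothesis $\Hom_{K^b(P_A)}(Q, R[i]) = 0$ for $i \neq 0$ says that the cohomology of $\Hom^{\bullet}(Q,R)$ is concentrated in degree $0$, so
\[
\sum_{n} (-1)^{n} \dim_K H^{n}\bigl(\Hom^{\bullet}(Q,R)\bigr) \;=\; \dim_K \Hom_{K^b(P_A)}(Q,R).
\]
On the other hand, the same alternating sum equals the alternating sum of the dimensions of the terms of the complex. After the change of variable $s = r+n$, which turns $(-1)^n$ into $(-1)^{s-r} = (-1)^{r-s}$, this becomes
\[
\sum_{n} (-1)^{n} \dim_K \Hom^{\bullet}(Q,R)^{n} \;=\; \sum_{r,s} (-1)^{r-s} \dim_K \Hom_A(Q^{r}, R^{s}),
\]
yielding the stated formula.

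There is really no conceptual obstacle here; the only point that needs care is the initial identification of $\Hom_{K^b(P_A)}(Q, R[n])$ with $H^{n}(\Hom^{\bullet}(Q,R))$, which uses that $Q$ is a bounded complex of projectives (so homotopy classes of chain maps coincide with maps in the derived category). Everything else is bookkeeping of signs and the standard Euler--Poincar\'e identity for bounded complexes of finite-dimensional vector spaces, so I expect the argument to be short.
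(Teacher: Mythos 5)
Your proof is correct. The paper gives no argument of its own here --- it simply cites Happel's alternating sum formula --- and what you write is exactly the standard proof underlying that reference: identify $\Hom_{K^b(P_A)}(Q,R[n])$ with $H^n$ of the total $\Hom$-complex and apply the Euler--Poincar\'e identity to a bounded complex of finite-dimensional vector spaces. One small simplification: since morphisms in $K^b(P_A)$ are by definition homotopy classes of chain maps, the identification with $H^n(\Hom^\bullet(Q,R))$ holds for purely formal reasons and does not actually use projectivity of the terms; that hypothesis would only be needed if you wanted to compare with morphisms in the derived category, which the statement never requires.
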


We note that the right-hand side of the above formula
can easily be computed using the Cartan matrix of $A$.

We end this section with the following collection
of important results.

\begin{theorem}
\label{th:2.3}
Let $A$ and $B$ be derived equivalent algebras.
Then $A$ is symmetric if and only if $B$ is symmetric.
\end{theorem}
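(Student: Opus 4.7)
The plan is to recognize symmetry as the triviality of the Nakayama functor and then exploit that this property, phrased as the identity being a Serre functor on $K^b(P_A)$, is manifestly preserved by triangulated equivalences. Recall that $A$ is symmetric precisely when $D(A) \cong A$ as $A$-$A$-bimodules; by the Eilenberg--Watts theorem, this is equivalent to the Nakayama functor $\nu_A = - \otimes_A D(A)$ being isomorphic, as an endofunctor of $\mod A$ (hence of $K^b(P_A)$), to the identity functor.

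First, I would verify that $\nu_A : K^b(P_A) \to K^b(P_A)$ is a Serre functor, that is, there is a bifunctorial isomorphism
$$\Hom_{K^b(P_A)}(X, Y) \cong D\Hom_{K^b(P_A)}(Y, \nu_A X)$$
for all $X, Y$ in $K^b(P_A)$. This reduces to the case $X = Y = A$, where it is the tautological identification $\Hom_A(A, A) = A \cong DD(A) = D\Hom_A(A, D(A))$, and then extends to all of $K^b(P_A)$ by the five-lemma using shifts, direct summands, and mapping cones. By Theorem \ref{th:2.1}, a derived equivalence of $A$ and $B$ produces a triangulated equivalence $F : K^b(P_A) \to K^b(P_B)$. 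Since Serre functors on triangulated categories are unique up to natural isomorphism, $F$ intertwines them, so $F \nu_A \cong \nu_B F$. Hence if $A$ is symmetric, so that $\nu_A \cong \mathrm{id}$, we get $\nu_B \cong F \nu_A F^{-1} \cong \mathrm{id}$, and Eilenberg--Watts applied in reverse delivers $D(B) \cong B$ as $B$-$B$-bimodules. The converse follows by swapping the roles of $A$ and $B$.

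The main obstacle is this last step: upgrading a natural isomorphism of triangulated endofunctors of $K^b(P_B)$ to an isomorphism of bimodules. For this one invokes the correspondence between cocontinuous right-exact endofunctors of $\mod B$ and $B$-$B$-bimodules (via $M \mapsto - \otimes_B M$), under which the identity functor corresponds to $B$ and $\nu_B$ to $D(B)$; one then verifies that the natural isomorphism on functors, evaluated on $B$ and tracked through both left and right $B$-actions, produces a genuine bimodule isomorphism. A subsidiary prerequisite, usually proved in tandem, is that self-injectivity itself is preserved by derived equivalence, which guarantees that $\nu_B$ restricts to an autoequivalence on $K^b(P_B)$ in the first place.
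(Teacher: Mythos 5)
Your argument is essentially correct, but the comparison with the paper is a bit lopsided: the paper does not prove Theorem~\ref{th:2.3} at all, it simply cites Rickard's result \cite[Corollary~5.3]{Ric3}. Rickard's own proof runs through two-sided tilting complexes: a derived equivalence is realized by an invertible complex of bimodules $X$, one shows $D(A)\otimes^{\mathbf L}_A X \cong X\otimes^{\mathbf L}_B D(B)$ in the derived category of $A$-$B$-bimodules, and then $D(A)\cong A$ forces $D(B)\cong B$ by cancelling $X$. Your Serre-functor argument is the standard modern repackaging of the same idea, phrased with one-sided categories instead of bimodule complexes, and it does go through: Serre duality $\Hom(P,Y)\cong D\Hom(Y,\nu_A P)$ for perfect $P$ is proved exactly by the d\'evissage you describe, Serre functors are unique up to natural isomorphism and hence intertwined by any triangulated equivalence, and the descent from a natural isomorphism $\nu_B\cong\mathrm{id}$ on $K^b(P_B)$ to a bimodule isomorphism $D(B)\cong B$ works by evaluating at the stalk complex $B$ and using naturality with respect to left multiplications (Eilenberg--Watts). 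The one genuine dependency you correctly flag is that $\nu_B$ must preserve $K^b(P_B)$, which requires knowing $B$ is self-injective (Gorenstein would suffice); since that preservation statement is itself usually deduced from the same machinery, your proof is not fully self-contained there. You could avoid this circularity by not insisting that $\nu$ be an endofunctor of the perfect complexes: the duality $\Hom_{D^b(\mod A)}(P,Y)\cong D\Hom_{D^b(\mod A)}(Y,\nu_A P)$ holds for $P$ perfect and arbitrary $Y\in D^b(\mod A)$ for any finite-dimensional algebra, so symmetry of $A$ is equivalent to the intrinsic statement that $\Hom(P,Y)\cong D\Hom(Y,P)$ bifunctorially for perfect $P$; this ``relative $0$-Calabi--Yau'' property transports along the equivalence directly, and evaluating at $P=B$, $Y\in\mod B$ gives $D(-)\cong\Hom_B(-,B)$, hence $D(B)\cong B$ as bimodules, with self-injectivity of $B$ falling out as a consequence rather than a prerequisite.
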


\begin{proof}
This is \cite[Corollary~5.3]{Ric3}.
\end{proof}

\begin{theorem}
\label{th:2.4}
Let $A$ and $B$ be derived equivalent algebras.
Then $A$ is periodic  if and only if $B$ is periodic. 
Moreover, if so,  then they have the same period.
\end{theorem}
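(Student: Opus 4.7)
The plan is to reduce periodicity of the algebra as a bimodule to an invariant of the stable bimodule category, and then use Rickard's theorem that a derived equivalence between algebras lifts to a derived equivalence between their enveloping algebras sending one regular bimodule to the other.

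First I would dispose of the self-injectivity hypothesis. Since periodic algebras are self-injective, and self-injectivity is known to be a derived invariant (this is another theorem of Rickard), we may assume that both $A$ and $B$ are self-injective. We may further assume, by decomposing into blocks, that $A$ and $B$ are indecomposable and not separable semisimple, so that the regular bimodules $A$ and $B$ have no nonzero projective direct summands in $\mod A^e$ and $\mod B^e$ respectively. Under these assumptions, the condition $\Omega_{A^e}^n(A) \cong A$ in $\mod A^e$ is equivalent to $A \cong A$ via an isomorphism in the stable bimodule category $\umod A^e$ shifted by $n$ syzygies, and similarly for $B$.

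Next, one invokes Rickard's fundamental result: if $A$ and $B$ are derived equivalent via a two-sided tilting complex $X \in D^b(\mod A^{\op}\otimes_K B)$ with inverse $Y \in D^b(\mod B^{\op}\otimes_K A)$, then the functor
\[
   F = Y \otimes_A^{\mathbb{L}} (-) \otimes_A^{\mathbb{L}} X \colon D^b(\mod A^e) \longrightarrow D^b(\mod B^e)
\]
is a triangle equivalence satisfying $F(A) \cong B$. This is the key input, giving a derived equivalence of enveloping algebras which transports the regular bimodule on one side to the regular bimodule on the other.

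Finally I would pass to the stable categories. For the self-injective algebras $A^e$ and $B^e$, the composite of the inclusion $\mod A^e \hookrightarrow D^b(\mod A^e)$ with the Verdier quotient by $K^b(P_{A^e})$ induces a triangle equivalence $\umod A^e \simeq D^b(\mod A^e)/K^b(P_{A^e})$, under which the shift functor $[1]$ on the derived side corresponds to $\Omega_{A^e}^{-1}$ on the stable side (and analogously for $B$). Since any derived equivalence preserves perfect complexes, $F$ descends to a triangle equivalence $\bar{F}\colon \umod A^e \to \umod B^e$ intertwining the shifts and still satisfying $\bar{F}(A) \cong B$. Therefore an isomorphism $\Omega_{A^e}^n(A) \cong A$ in $\umod A^e$ is transported, via $\bar{F}$ and its compatibility with $[1]$, to an isomorphism $\Omega_{B^e}^n(B) \cong B$ in $\umod B^e$, and conversely. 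Combined with the first paragraph, this gives $\Omega_{A^e}^n(A) \cong A$ in $\mod A^e$ if and only if $\Omega_{B^e}^n(B) \cong B$ in $\mod B^e$, so $A$ is periodic if and only if $B$ is, with the same period. The main obstacle is checking carefully that $F$ really does satisfy $F(A) \cong B$ in $D^b(\mod B^e)$ — this is exactly Rickard's theorem on derived equivalence of enveloping algebras, and the rest of the argument is a formal consequence of the triangulated structure on the stable category of a self-injective algebra.
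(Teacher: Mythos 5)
Your argument is correct: the paper itself gives no proof of this theorem, deferring entirely to \cite[Theorem~2.9]{ES1}, and what you have written is precisely the standard proof of that cited result (Rickard's two-sided tilting complex $X$ with inverse $Y$ yields the equivalence $Y\otimes_A^{\mathbb{L}}-\otimes_A^{\mathbb{L}}X$ of the derived categories of the enveloping algebras taking $A$ to $B$, which descends to the stable bimodule categories where $\Omega^{-1}$ is the shift). All the supporting facts you quote — derived invariance of self-injectivity, self-injectivity of $A^e$, blockwise correspondence, and the lifting of stable isomorphisms to genuine ones for modules without projective summands — are standard and correctly deployed, so there is nothing to object to.
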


\begin{proof}
See \cite[Theorem~2.9]{ES1}.
\end{proof}

\begin{theorem}
\label{th:2.5}
Let $A$ and $B$ be derived equivalent selfinjective algebras.
Then the following equivalences hold.
\begin{enumerate}[(i)]
 \item
  $A$ is tame if and only if $B$ is tame.
 \item
  $A$ is of polynomial growth if and only if $B$ is of polynomial growth.
\end{enumerate}
\end{theorem}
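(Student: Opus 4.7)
The strategy is to reduce to the case of a stable equivalence of Morita type and then verify that the latter preserves both tameness and polynomial growth. The first step invokes Rickard's theorem that any derived equivalence between selfinjective algebras lifts to a stable equivalence of Morita type: there exist bimodules ${}_A M_B$ and ${}_B N_A$, each projective as a one-sided module on both sides, such that $M \otimes_B N \cong A \oplus P$ as $A$-$A$-bimodules and $N \otimes_A M \cong B \oplus Q$ as $B$-$B$-bimodules, where $P$ and $Q$ are projective bimodules. This reduction is the reason the selfinjectivity hypothesis is needed.

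Having obtained such $M$ and $N$, the plan is to analyze the exact functor $F = - \otimes_A M \colon \mod A \to \mod B$. Since $M$ is projective as a right $B$-module, $F$ is exact, and since $M$ is projective as a left $A$-module, $F$ sends projectives to projectives. The induced functor $\bar F \colon \umod A \to \umod B$ is an equivalence of stable categories with quasi-inverse $- \otimes_B N$, so $F$ induces a bijection on isomorphism classes of non-projective indecomposables. The $K$-dimension transforms by a controlled rule: if $\dim_K X = d$, then $\dim_K F(X) \leq d \cdot \dim_K M$, and conversely $X$ appears (up to projective summand) as a direct summand of $F(X) \otimes_B N$, which has dimension at most $\dim_K F(X) \cdot \dim_K N$. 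Hence the bijection between non-projective indecomposables of $\mod A$ and $\mod B$ relates their dimensions by a bounded two-sided linear factor.

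The next step is to transport one-parameter families through $F$. A $K[x]$-$A$-bimodule $L$ free of finite rank over $K[x]$ yields, for each $\lambda \in K$, the $A$-module $L/(x-\lambda)L$; applying $- \otimes_A M$ produces a $K[x]$-$B$-bimodule $L \otimes_A M$, still free of finite rank over $K[x]$ because $M$ is projective as a left $A$-module. The reverse construction works via $N$. Thus one-parameter families of $A$-modules transform, up to direct summands that are projective $B$-modules, into one-parameter families of $B$-modules, and vice versa. Since there are only finitely many indecomposable projectives in each of $\mod A$ and $\mod B$, splitting off projective summands from parametrized families does not affect the tameness property.

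The main obstacle is the dimension-bookkeeping step. For tameness, one must show that for each $d \geq 1$, almost all non-projective indecomposable $B$-modules of dimension $d$ lie in finitely many one-parameter families. Via $\bar F^{-1}$, such modules correspond to non-projective indecomposable $A$-modules whose dimensions lie in a fixed finite range $[d/\dim_K M,\, d\cdot \dim_K N]$. For each dimension in this range, tameness of $A$ supplies finitely many one-parameter families covering almost all indecomposables; pushing these through $F$ and discarding projective summands yields finitely many one-parameter families of $B$-modules covering almost all indecomposable $B$-modules of dimension $d$. For polynomial growth, the number of families needed at dimension $d$ for $B$ is bounded by a sum of the corresponding counts for $A$ over a range of dimensions of length linear in $d$; since a polynomial summed over a linearly growing range is still polynomial in $d$, polynomial growth is preserved as well. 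The delicate technical points are the projective-summand corrections and the verification that the bijection on isoclasses is induced on families in a way compatible with the algebraic geometry of module varieties — this is the reason the argument truly requires a \emph{stable equivalence of Morita type} rather than a general stable equivalence.
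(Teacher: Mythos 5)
Your first step coincides with the paper's: both proofs invoke Rickard to pass from a derived equivalence of selfinjective algebras to a stable equivalence (the paper cites \cite[Corollary~2.2]{Ric2}, and the sharper ``of Morita type'' version you use is \cite[Corollary~5.5]{Ric3}). After that the routes diverge. The paper simply cites the fact that a stable equivalence preserves tameness and polynomial growth, via the generic-module characterizations of Crawley-Boevey \cite[Theorems 4.4 and 5.6]{CB} and Krause--Zwara \cite[Corollary~2]{KZ}; these results work with plain stable equivalences and sidestep any manipulation of one-parameter families. You instead attempt a direct transport of parametrizing bimodules through the functors $-\otimes_A M$ and $-\otimes_B N$. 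Most of your bookkeeping (exactness, preservation of $K[x]$-freeness, the two-sided linear bound on dimensions, the summation argument for polynomial growth) is sound.

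The one genuine gap is the step you yourself flag as ``delicate'': discarding projective summands from the transported families. If $L$ parametrizes indecomposable $A$-modules $X_\lambda$, then the fibres of $L\otimes_A M$ are $Y_\lambda\oplus P_\lambda$ with $P_\lambda$ projective and a priori varying with $\lambda$, whereas the definition of tameness requires bimodules whose fibres are isomorphic to the indecomposables themselves, not merely contain them as direct summands. Producing from $L\otimes_A M$ a finite collection of $K[x]$-$B$-bimodules (after localizing $K[x]$ away from finitely many points, say) whose fibres are exactly the $Y_\lambda$ requires a generic-decomposition argument that you do not supply; asserting that ``splitting off projective summands does not affect tameness'' is precisely the point at issue, not a reduction of it. This is the difficulty that the generic-module formalism of \cite{CB} and \cite{KZ} is designed to avoid, which is presumably why the paper routes the argument through those references rather than through families. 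Your proof would be complete if you either supplied that splitting lemma or replaced the second half by the citations the paper uses.
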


\begin{proof}
It follows from the assumption and
\cite[Corollary~2.2]{Ric2}
(see also \cite[Corollary~5.3]{Ric3})
that the algebras $A$ and $B$ are stably equivalent.
Then the equivalences (i) and (ii) hold by 
\cite[Theorems 4.4 and 5.6]{CB}
and
\cite[Corollary~2]{KZ}.
\end{proof}

\section{Higher tetrahedral algebras}
\label{sec:hta}

In this section we recall some facts on higher tetrahedral
algebras established in \cite{ES3}, which will be crucial 
in the proofs
of Theorems \ref{th:main1} and \ref{th:main2}.

Consider the tetrahedron
\[
\begin{tikzpicture}
[scale=1]
\node (A) at (-2,0) {$\bullet$};
\node (B) at (2,0) {$\bullet$};
\node (C) at (0,.85) {$\bullet$};
\node (D) at (0,2.8) {$\bullet$};
\coordinate (A) at (-2,0) ;
\coordinate (B) at (2,0) ;
\coordinate (C) at (0,.85) ;
\coordinate (D) at (0,2.8) ;
\draw[thick]
(A) edge node [left] {3} (D)
(D) edge node [right] {6} (B)
(D) edge node [right] {2} (C)
(A) edge node [above] {5} (C)
(C) edge node [above] {4} (B)
(A) edge node [below] {1} (B) ;
\end{tikzpicture}
\]
with the coherent orientation of triangles:
$(1\ 5\ 4)$,
$(2\ 5\ 3)$,
$(2\ 6\ 4)$,
$(1\ 6\ 3)$.
Then, following \cite[Section~6]{ES2},
we have the associated triangulation quiver
$(Q,f)$ of the form
\[
\begin{tikzpicture}
[scale=.85]
\node (1) at (0,1.72) {$1$};
\node (2) at (0,-1.72) {$2$};
\node (3) at (2,-1.72) {$3$};
\node (4) at (-1,0) {$4$};
\node (5) at (1,0) {$5$};
\node (6) at (-2,-1.72) {$6$};
\coordinate (1) at (0,1.72);
\coordinate (2) at (0,-1.72);
\coordinate (3) at (2,-1.72);
\coordinate (4) at (-1,0);
\coordinate (5) at (1,0);
\coordinate (6) at (-2,-1.72);
\fill[fill=gray!20]
      (0,2.22cm) arc [start angle=90, delta angle=-360, x radius=4cm, y radius=2.8cm]
 --  (0,1.72cm) arc [start angle=90, delta angle=360, radius=2.3cm]
     -- cycle;
\fill[fill=gray!20]
    (1) -- (4) -- (5) -- cycle;
\fill[fill=gray!20]
    (2) -- (4) -- (6) -- cycle;
\fill[fill=gray!20]
    (2) -- (3) -- (5) -- cycle;

\node (1) at (0,1.72) {$1$};
\node (2) at (0,-1.72) {$2$};
\node (3) at (2,-1.72) {$3$};
\node (4) at (-1,0) {$4$};
\node (5) at (1,0) {$5$};
\node (6) at (-2,-1.72) {$6$};
\draw[->,thick] (-.23,1.7) arc [start angle=96, delta angle=108, radius=2.3cm] node[midway,right] {$\nu$};
\draw[->,thick] (-1.87,-1.93) arc [start angle=-144, delta angle=108, radius=2.3cm] node[midway,above] {$\mu$};
\draw[->,thick] (2.11,-1.52) arc [start angle=-24, delta angle=108, radius=2.3cm] node[midway,left] {$\alpha$};
\draw[->,thick]
% (5) edge node [right] {$\delta$} (1)
 (1) edge node [right] {$\delta$} (5)
(2) edge node [left] {$\varepsilon$} (5)
(2) edge node [below] {$\varrho$} (6)
(3) edge node [below] {$\sigma$} (2)
% (1) edge node [left] {$\gamma$} (4)
 (4) edge node [left] {$\gamma$} (1)
(4) edge node [right] {$\beta$} (2)
(5) edge node [right] {$\xi$} (3)
% (4) edge node [below] {$\eta$} (5)
 (5) edge node [below] {$\eta$} (4)
(6) edge node [left] {$\omega$} (4)
;
\end{tikzpicture}
\]
where $f$ is the permutation of arrows of order $3$ described
by the four shaded 
$3$-cycles.
%triangles.
We denote by $g$ the permutation on the set of arrows of $Q$ 
%of order $3$
whose $g$-orbits are the 
four white 
%triangles.
$3$-cycles.
%
%Let
%$m_{\bullet} : \cO(g) \to \bN^*$
%be the trivial weight function
%and
%$c_{\bullet} : \cO(g) \to K^*$
%an arbitrary parameter function.
%It was shown in \cite[Section~6]{ESk-WSA}
%that the  weighted surface algebra
%$\Lambda(T,\vec{T},m_{\bullet},c_{\bullet})$
%is isomorphic to the weighted triangulation algebra
%$\Lambda(\lambda) = \Lambda(Q,f,m_{\bullet},c_{\bullet}^{\lambda})$,
%with $\lambda \in K^*$,
%and
%the parameter function
%$c_{\bullet}^{\lambda} : \cO(g) \to K^*$
%given by
%$c_{\cO(\alpha)}^{\lambda} = \lambda$,
%$c_{\cO(\beta)}^{\lambda}  = 1$,
%$c_{\cO(\gamma)}^{\lambda}  = 1$,
%$c_{\cO(\sigma)}^{\lambda}  = 1$.
%Observe that
%$\Lambda(\lambda)$
%is given by the quiver $Q$ and
%the relations:
%\begin{align*}
%  \gamma \delta &= \lambda \alpha \varepsilon ,
%  &
%  \delta \eta &= \nu \omega ,
%  &
%  \eta \gamma &= \xi \sigma ,
%  &
%  \alpha \varrho &= \gamma \nu ,
%  &
%  \varrho \omega &= \lambda \varepsilon \eta ,
%  &
%  \omega \alpha &= \mu \beta ,
%  \\
%  \beta \varepsilon &= \sigma \delta ,
%  &
%  \varepsilon \xi &= \varrho \mu ,
%  &
%  \xi \beta &= \lambda \eta \alpha ,
%  &
%  \sigma \nu &= \beta \varrho ,
%  &
%  \nu \mu &= \delta \xi ,
%  &
%\mu \sigma &= \omega \gamma ,
%  \\
%  && \!\!\!\!\!\!\!\!\!\!\!\!\!\!\!\!\!\!\!\!\!
%\theta f(\theta) g\big(f(\theta)\big) &= 0 &
%\!\!\!\!\mbox{ and } \ && \!\!\!\!\!\!\!\!\!\!\!\!\!\!\!\!\!\!\!\!
%\theta g(\theta) f\big(g(\theta)\big) &= 0 &
% \!\!\!\!\!\!\!\!\!\!\!\! \mbox{ for all arrows } \!\!\!\!\!\!\!\!\!\!\!\!\!\!\!\!\! &&
% \!\!\!\!\!\theta \in Q_1 .&
%\end{align*}

Let $m \geq2$ be a natural number and  $\lambda \in K^*$.
Following \cite{ES3},
the (non-singular) 
\emph{tetrahedral algebra $\Lambda(m,\lambda)$ of degree $m$} 
is the algebra given by the above
quiver $Q$ and the relations:
\begin{align*}
 \gamma\delta &= \beta\varepsilon + \lambda (\beta\varrho\omega)^{m-1} \beta\varepsilon, 
 &
 \delta\eta &= \nu\omega,
 &
 \eta\gamma &= \xi\alpha,
 & 
 \nu \mu &= \delta\xi ,
\\
 \varrho\omega &= \varepsilon\eta + \lambda (\varepsilon\xi\sigma)^{m-1} \varepsilon\eta,
 &
 \omega\beta &= \mu\sigma,
 &
 \beta\varrho &= \gamma\nu ,
 &
 \mu \alpha &= \omega \gamma ,
\\
 \xi\sigma &= \eta\beta + \lambda (\eta\gamma\delta)^{m-1} \eta\beta,
 &
 \sigma\varepsilon &= \alpha\delta,
 &
 \varepsilon\xi &= \varrho\mu,
 &
 \alpha\nu &= \sigma\varrho, 
\\
\omit\rlap{\qquad\quad\ \ $\big(\theta f(\theta) f^2(\theta)\big)^{m-1} \theta f(\theta) g\big(f(\theta)\big) = 0$ for any arrow $\theta$ in $Q$.}
\end{align*}

The following theorem follows from 
Theorems 1, 2, 3
proved in \cite{ES3} and describes some basic properties
of higher tetrahedral algebras.

\begin{theorem}
\label{th:3.1}
Let $\Lambda = \Lambda(m,\lambda)$ be a higher tetrahedral algebra
with $m \geq2$ and  $\lambda \in K^*$.
Then the following statements hold:
\begin{enumerate}[(i)]
 \item
  $\Lambda$ is a finite-dimensional algebra 
  with $\dim_K \Lambda = 36 m$.
 \item
  $\Lambda$ is a symmetric algebra.
 \item
  $\Lambda$ is a periodic algebra of period $4$.
 \item
  $\Lambda$ is a tame algebra of non-polynomial growth.
\end{enumerate}
\end{theorem}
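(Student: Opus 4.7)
The plan is to invoke the three principal theorems of \cite{ES3}, which together yield all four statements, but for the sake of a proof sketch let me indicate what substantive content is required for each part. The four assertions split naturally into a dimension count (i), a structural property (ii), a homological property (iii), and a representation-type statement (iv), and each can in principle be verified independently using the explicit quiver $Q$ and the given relations.

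For (i), I would fix an admissible ordering on the paths in $Q$ and use the defining relations as a reduction system. The relations come in two flavours: the ``commutation'' type $\gamma\delta = \beta\varepsilon + \lambda(\beta\varrho\omega)^{m-1}\beta\varepsilon$ together with its cyclic analogs coming from the $f$-orbits, and the ``zero'' relations $(\theta f(\theta) f^2(\theta))^{m-1}\theta f(\theta)g(f(\theta)) = 0$ indexed by arrows $\theta$ of $Q$. I would show the reductions terminate and produce a canonical set of normal-form paths, then count the dimension of $e_i\Lambda e_j$ for each pair of vertices $i,j$, summing to $36m$.

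For (ii), once such a basis is in hand, I would identify a socle generator in each indecomposable projective $e_i\Lambda$: the natural candidates are the maximal-length paths $(\theta f(\theta) f^2(\theta))^m$ at each vertex $i$, and the commutation relations ensure these agree for all three arrows starting at $i$, giving a one-dimensional socle equal to the top. A symmetrizing form $\tau : \Lambda \to K$ then sends each socle element to $1$ and annihilates the complement, with symmetry $\tau(ab) = \tau(ba)$ verified on basis paths. For (iii), I would construct an explicit minimal projective bimodule resolution
\begin{equation*}
0 \longrightarrow \Lambda \longrightarrow P_3 \longrightarrow P_2 \longrightarrow P_1 \longrightarrow P_0 \longrightarrow \Lambda \longrightarrow 0
\end{equation*}
by reading off the generators of each syzygy from the relations and the arrows of $Q$, and then identify $\Omega^4_{\Lambda^e}(\Lambda)$ with $\Lambda$ as bimodules. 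For (iv), tameness would follow by regarding $\Lambda(m,\lambda)$ as a deformation of the weighted surface algebra attached to the tetrahedron triangulation, whose representation type is controlled by the results in the weighted-surface-algebra series; non-polynomial growth follows since such an algebra does not appear in the polynomial-growth classification of \cite{BES}.

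The main obstacle is the explicit period-four bimodule resolution in (iii). This requires a careful matching of the generators of each syzygy against the relations and arrows at every homological degree, and the final identification $\Omega^4_{\Lambda^e}(\Lambda) \cong \Lambda$ as bimodules (not merely as right modules) depends sensitively on the deformation parameter $\lambda$ entering through the commutation relations; the other parts, while computational, are essentially routine once (iii) is in place.
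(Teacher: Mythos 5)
Your proposal takes essentially the same route as the paper: Theorem~\ref{th:3.1} is simply quoted from Theorems 1, 2 and 3 of \cite{ES3}, so your opening sentence invoking those results is exactly the paper's proof. The remaining sketch of how \cite{ES3} actually establishes each part (normal-form basis, symmetrizing form on the socle, period-four bimodule resolution, degeneration arguments for tameness) is a reasonable outline of that external work but is not needed for the argument here.
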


The following proposition follows from 
\cite[Section~4]{ES3}.

\begin{proposition}
\label{prop:3.2}
Let $\Lambda = \Lambda(m,\lambda)$ be a higher tetrahedral algebra
with $m \geq2$ and  $\lambda \in K^*$.
Then the Cartan matrix $C_{\Lambda}$ of $\Lambda$ is of the form
\[
 \begin{bmatrix}
  m+1 & m-1 & m & m & m & m \\
  m-1 & m+1 & m & m & m & m \\
  m & m & m+1 & m-1 & m & m \\
  m & m & m-1 & m+1 & m & m \\
  m & m & m & m & m+1 & m-1 \\
  m & m & m & m & m-1 & m+1 
 \end{bmatrix} .
\]
\end{proposition}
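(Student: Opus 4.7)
The plan is to compute each entry $c_{ij} = \dim_K e_i \Lambda e_j$, where $e_1,\dots,e_6$ are the primitive idempotents of $\Lambda = \Lambda(m,\lambda)$ attached to the vertices of $Q$, by exhibiting an explicit $K$-basis of $\Lambda$ consisting of equivalence classes of paths in $Q$ modulo the defining ideal of relations, and then grouping these classes by source and target.

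First, I would exploit the $S_4$-symmetry of the tetrahedron, which acts on the triangulation quiver $(Q,f)$ preserving the permutation $f$, the complementary permutation $g$, and all the defining relations. This action induces algebra automorphisms of $\Lambda$ permuting the idempotents, so every Cartan entry depends only on the $S_4$-orbit of the unordered pair $\{i,j\}$. The 21 unordered pairs of vertices fall into three orbits: the six diagonal pairs; the three ``paired'' off-diagonal pairs $\{1,2\}, \{3,4\}, \{5,6\}$ corresponding to pairs of opposite edges of the tetrahedron; and the remaining twelve off-diagonal pairs. Thus it suffices to compute three numbers, and to check that they equal $m+1$, $m-1$, and $m$ respectively.

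Second, I would set up a normal form on paths. Each arrow lies in exactly one $f$-cycle (shaded triangle, length $3$) and one $g$-cycle (white triangle, length $3$), so any path can be described by a starting arrow and a word in the alternating $f$- and $g$-rotations. The deformed commutation relations of the form $\gamma\delta = \beta\varepsilon + \lambda(\beta\varrho\omega)^{m-1}\beta\varepsilon$ (and the eleven similar relations) allow any path of length at least two to be rewritten so that its first two arrows form an initial segment of a fixed $f$-cycle, reducing it to a short initial word times a power of the $f$-cycle through its source. The zero relations $(\theta f(\theta)f^2(\theta))^{m-1} \theta f(\theta) g(f(\theta)) = 0$ then bound the admissible powers. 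Counting the surviving representatives by source and target gives $m+1$ on the diagonal (the idempotent $e_i$ together with $m$ nonzero powers of the cycle through $i$, with the $f$- and $g$-cycles identified up to the deformation term), $m-1$ on the paired off-diagonal entries (a deficit caused by one would-be basis element being absorbed into the $\lambda$-correction at maximal degree), and $m$ on the remaining entries.

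The main obstacle is to verify that the spanning set constructed in the second step is actually $K$-linearly independent modulo the relations, as opposed to merely a spanning set; this amounts to ruling out any unforeseen dependence produced by combining the twelve deformed commutation relations with the twelve zero relations. A convenient consistency check, once the candidate basis is in place, is that the sum of all proposed Cartan entries equals $6 \cdot 6m = 36m$, matching the dimension of $\Lambda$ stated in Theorem~\ref{th:3.1}(i); any missed relation would create a discrepancy here. Since the required analysis has already been performed in \cite[Section~4]{ES3}, this completes the proposal.
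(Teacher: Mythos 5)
The paper gives no independent proof of Proposition~\ref{prop:3.2}: it records only that the statement ``follows from \cite[Section~4]{ES3}'', which is precisely the citation you fall back on in your last sentence, so at that level your proposal and the paper coincide. The substance of your proposal, however, is the sketched self-contained computation, and there the symmetry reduction in your first step has a genuine gap. The tetrahedral symmetry group permutes the twelve arrows of $Q$ transitively (an arrow corresponds to an incident pair consisting of a face and one of its edges), hence permutes the twelve commutation relations transitively; but in the presentation of $\Lambda(m,\lambda)$ only three of those relations carry the $\lambda$-deformation (the ones with left-hand sides $\gamma\delta$, $\varrho\omega$, $\xi\sigma$). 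So the quiver automorphisms induced by the tetrahedron do \emph{not} preserve the defining ideal as written, and you cannot conclude without further argument that the Cartan entries are constant on orbits of pairs $\{i,j\}$; moreover only the rotation subgroup $A_4$ preserves the orientation $f$, reflections yielding at best anti-automorphisms. This is repairable (for instance by arguing that the Cartan matrix depends only on a suitable associated graded algebra, where the $\lambda$-terms vanish, or by exhibiting explicit rescalings of arrows), but some such step must be supplied.

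Two smaller points. Your heuristic that the entries $m-1$ arise from a basis element ``absorbed into the $\lambda$-correction at maximal degree'' is misleading: the value $m-1$ at the opposite-edge pairs $\{1,2\},\{3,4\},\{5,6\}$ is already a feature of the undeformed relations and reflects the count of normal-form paths between opposite edges, not the deformation. On the positive side, your consistency check $6(m+1)+6(m-1)+24m=36m$ against Theorem~\ref{th:3.1}(i) is correct and is a useful safeguard. Since you do not actually carry out the path enumeration or the linear-independence verification (you rightly flag the latter as the main obstacle), the proposal is incomplete as a self-contained proof, but as an appeal to \cite[Section~4]{ES3} it is exactly what the paper itself does.
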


\section{Proof of Theorem~\ref{th:main1}}\label{sec:proof1}

In this section we describe the Cartan matrices of 
higher spherical algebras.

Let $S = S(m,\lambda)$ be a higher spherical algebra
with $m \geq2$ and  $\lambda \in K^*$.
We start by collecting   further identities in $S$, they follow directly
from the relations defining $S$.

\begin{lemma}
\label{lem:4.1}
The following relations hold in $S$:
\begin{enumerate}[(i)]
 \item
  $(\beta \gamma \sigma \alpha)^{m-1}  \beta \gamma \sigma \varrho = 0$ and
  $(\alpha \beta \gamma \sigma)^m \varrho = 0$.
 \item
  $\omega \gamma \sigma \alpha (\beta \gamma \sigma \alpha)^{m-1} = 0$ and
  $\omega (\gamma \sigma \alpha \beta)^m = 0$.
 \item
  $(\sigma \alpha \beta \gamma)^{m-1} \sigma \alpha \beta \nu = 0$ and 
  $(\gamma \sigma \alpha \beta)^m \nu = 0$.
 \item
  $\delta \alpha \beta \gamma (\sigma \alpha \beta \gamma)^{m-1} = 0$ and
  $\delta (\alpha \beta \gamma \sigma)^{m} = 0$.
 \item
  $(\varrho \omega \nu \delta)^r = (\alpha \beta \gamma \sigma)^r$ and 
  $(\nu \delta \varrho \omega)^r = (\gamma \sigma \alpha \beta)^r$,
  for  $2 \leq r \leq m$.
\end{enumerate}
\end{lemma}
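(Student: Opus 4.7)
The plan is to exploit the fact that the defining relations split into two types: the ``simple'' ones of the form $xY = zY$ (namely $\gamma\sigma\varrho = \nu\delta\varrho$, $\alpha\beta\nu = \varrho\omega\nu$, $\delta\alpha\beta = \delta\varrho\omega$, $\omega\gamma\sigma = \omega\nu\delta$), and the ``corrected'' ones of the form $wYz = xYz + \lambda(\cdots)^{m-1}xYz$ (the four commutativity relations carrying a $\lambda$-term). If one multiplies a simple relation on the appropriate side by a further arrow and then invokes the matching corrected relation, both sides produce the same leading term, so the correction term is forced to vanish. Each of (i)--(iv) falls out of exactly one such pairing, and (v) reduces to a binomial computation once the $r=1$ version of the identity has been established.

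For (i) I multiply $\gamma\sigma\varrho = \nu\delta\varrho$ on the left by $\beta$ to obtain $\beta\gamma\sigma\varrho = \beta\nu\delta\varrho$, and then substitute $\beta\nu\delta = \beta\gamma\sigma + \lambda(\beta\gamma\sigma\alpha)^{m-1}\beta\gamma\sigma$ into the right-hand side. Cancelling the common $\beta\gamma\sigma\varrho$ leaves $\lambda(\beta\gamma\sigma\alpha)^{m-1}\beta\gamma\sigma\varrho = 0$, which is the first identity; the second follows by multiplying on the left by $\alpha$ and using $\alpha(\beta\gamma\sigma\alpha)^{m-1}\beta\gamma\sigma = (\alpha\beta\gamma\sigma)^m$. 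Parts (ii), (iii), (iv) are proved by completely analogous manipulations: for (ii) start from $\omega\gamma\sigma = \omega\nu\delta$, multiply on the right by $\alpha$, and apply the relation for $\nu\delta\alpha$; for (iii) start from $\alpha\beta\nu = \varrho\omega\nu$, multiply on the left by $\sigma$, and apply the relation for $\sigma\varrho\omega$; for (iv) start from $\delta\alpha\beta = \delta\varrho\omega$, multiply on the right by $\gamma$, and apply the relation for $\varrho\omega\gamma$. In every case the second identity of the pair is obtained from the first by multiplying by one further arrow.

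For (v), I first derive the two $r=1$ identities $\varrho\omega\nu\delta = \alpha\beta\gamma\sigma + \lambda(\alpha\beta\gamma\sigma)^m$ and $\nu\delta\varrho\omega = \gamma\sigma\alpha\beta + \lambda(\gamma\sigma\alpha\beta)^m$, each by composing the appropriate simple relation with one further arrow and then inserting a corrected relation, exactly as above. Next I note that the defining relation $(\alpha\beta\gamma\sigma)^m \alpha = 0$, multiplied on the right by $\beta\gamma\sigma$, gives $(\alpha\beta\gamma\sigma)^{k} = 0$ for every $k \geq m+1$, and symmetrically for $(\gamma\sigma\alpha\beta)^k$. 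Since $\alpha\beta\gamma\sigma$ commutes with its own powers, the binomial theorem yields
\[
(\varrho\omega\nu\delta)^r \;=\; \sum_{j=0}^{r}\binom{r}{j}\lambda^j(\alpha\beta\gamma\sigma)^{r + j(m-1)} .
\]
For $r \geq 2$ and $j \geq 1$ one has $r + j(m-1) \geq 2 + (m-1) = m+1$, so every such term vanishes and only $(\alpha\beta\gamma\sigma)^r$ survives; the second identity in (v) is obtained symmetrically.

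No serious obstacle is anticipated: the lemma is a bookkeeping exercise in the defining relations. The only thing requiring care is the pairing of each ``simple'' relation with the correct ``corrected'' relation so that the leading terms cancel and the desired monomial identity is extracted cleanly.
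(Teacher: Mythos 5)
Your proposal is correct and follows essentially the same route as the paper: each of (i)--(iv) is obtained by pairing one of the ``plain'' relations with the matching $\lambda$-corrected relation so that the leading terms cancel, and (v) follows from $\varrho\omega\nu\delta=\alpha\beta\gamma\sigma+\lambda(\alpha\beta\gamma\sigma)^m$ together with the vanishing of $(\alpha\beta\gamma\sigma)^{k}$ for $k\geq m+1$ (the paper squares and inducts where you expand binomially, which is the same computation). The paper only writes out (i) and (v) and leaves the rest as analogous; your pairings for (ii)--(iv) are the intended ones and check out.
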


\bigskip

For example, consider part  (i). We have
$\lambda(\beta\gamma\sigma\alpha)^{m-1}\beta\gamma\sigma = \beta\nu\delta - \beta\gamma\sigma.
$
We postmultiply this with $\rho$ and get zero since $\gamma\sigma\rho = \nu\delta\rho$. The second part follows, by rewriting the first part and premultiply with $\alpha$. 
For part (v), starting with
$$\rho\omega\nu\delta = \alpha\beta\gamma\sigma + \lambda(\alpha\beta\gamma\sigma)^m
$$
and squaring, one gets  
$(\rho\omega\nu\delta)^2 = (\alpha\beta\gamma\sigma)^2$ and
(v) follows by induction.

Using the relations, it is easy to  write down bases for the 
indecomposable projective modules, and 
prove the following.

\bigskip

\begin{proposition}
\label{prop:4.2}
The Cartan matrix $C_{S}$ of $S$ is of the form
\[
 \begin{bmatrix}
  m+1 & m & m+1 & m & m & m \\
  m & m+1 & m & m & m & m-1 \\
  m+1 & m & m+1 & m & m & m \\
  m & m & m & m+1 & m-1 & m \\
  m & m & m & m-1 & m+1 & m \\
  m & m-1 & m & m & m & m+1 
 \end{bmatrix} .
\]
In particular, $\dim_K S = 36m+4$.
\end{proposition}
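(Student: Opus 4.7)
The plan is to exhibit, for each primitive idempotent $e_i$, an explicit $K$-basis of the right indecomposable projective $P_i = e_i S$, sorted by terminal vertex, so that the $(i,j)$-entry of $C_S$ becomes the number of basis paths from $i$ to $j$, and $\dim_K S = \sum_i \dim_K P_i$ equals the sum of all entries of $C_S$.

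First I would produce a spanning set of normal-form paths starting at each vertex. The ``commutativity-type'' relations
\[
\alpha\beta\nu = \varrho\omega\nu, \quad \delta\alpha\beta = \delta\varrho\omega, \quad \omega\gamma\sigma = \omega\nu\delta, \quad \gamma\sigma\varrho = \nu\delta\varrho
\]
allow the ``short'' arrows $\delta, \nu, \varrho, \omega$ at vertices $5$ and $6$ to be pushed toward the endpoints of any word, while the cycle-identifications such as $\beta\nu\delta = \beta\gamma\sigma + \lambda(\beta\gamma\sigma\alpha)^{m-1}\beta\gamma\sigma$ (and its three rotations) let one rewrite most paths in terms of the central $4$-cycle $\alpha\beta\gamma\sigma$ and its cyclic variants. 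The vanishing relations $(\alpha\beta\gamma\sigma)^m\alpha = 0$ and $(\gamma\sigma\alpha\beta)^m\gamma = 0$, together with the consequences collected in Lemma~\ref{lem:4.1}, then bound the length of every nonzero normal form. This yields, for each $i$, a finite list of candidate basis paths whose cardinalities match the entries of the claimed row $i$ of $C_S$.

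The main obstacle will be proving linear independence of the candidate basis, i.e.\ ruling out further coincidences between normal forms. I would handle this by setting up a terminating rewriting system on paths and verifying confluence via the diamond lemma, checking that every overlap of left-hand sides of the defining relations reduces to a common form; equivalently, one can exhibit a faithful representation of $S$ on a vector space of the expected dimension in which the normal forms act as distinct basis vectors. A useful sanity check comes from the socle: at each vertex $i$ the normal-form analysis should produce a unique ``top cycle'' of length $4m$ (a rotation of $(\alpha\beta\gamma\sigma)^m$), which will turn out to span the socle once symmetry is established; the appearance of exactly one such element per projective is consistent with the diagonal $m+1$ entries of the proposed Cartan matrix.

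Finally, tallying the basis paths of $P_i$ by terminal vertex $j$ produces row $i$ of the matrix displayed in the statement. The six row sums are $6m+2,\ 6m,\ 6m+2,\ 6m,\ 6m,\ 6m$, whose total gives $\dim_K S = 36m + 4$, as claimed.
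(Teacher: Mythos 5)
Your approach coincides with the paper's: the paper likewise derives the auxiliary identities of Lemma~\ref{lem:4.1} from the defining relations and then simply asserts that one can write down bases of the indecomposable projective modules and read off the Cartan matrix, so your normal-form enumeration of paths by terminal vertex, followed by summing the row totals $6m+2,\,6m,\,6m+2,\,6m,\,6m,\,6m$ to get $36m+4$, is exactly the intended argument. The only place you go beyond the paper is in flagging linear independence and proposing a confluence or faithful-representation check; the paper leaves that verification entirely to the reader (and it is in any case corroborated later by the independent computation of $C_R$ in Lemma~\ref{lem:5.2} together with the surjection from $S$ onto $R$ implicit in the proof of Theorem~\ref{th:5.3}).
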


\section{Proof of Theorem~\ref{th:main3}}\label{sec:proof3}

Let $\Lambda = \Lambda(m,\lambda)$ 
%be a higher tetrahedral algebra with 
for some fixed
$m \geq2$ and  $\lambda \in K^*$.
For each vertex $i$ of the quiver $Q$ defining $\Lambda$,
we denote by $P_i = e_i \Lambda$ the associated 
indecomposable projective module in $\mod \Lambda$.
Moreover, for any arrow $\theta$ from $j$ to $k$,
we denote by
$\theta : P_k \to  P_j$ the homomorphism in $\mod \Lambda$
given by the left multiplication by $\theta$.
We consider the following complexes in $K^b(P_{\Lambda})$:
\begin{align*}
 T_1 &: \quad 0 \longrightarrow P_1 \longrightarrow 0, \\
 T_2 &: \quad 0 \longrightarrow P_5 \longrightarrow 0, \\
 T_4 &: \quad 0 \longrightarrow P_3 \longrightarrow 0, \\
 T_5 &: \quad 0 \longrightarrow P_4 \longrightarrow 0, \\
 T_6 &: \quad 0 \longrightarrow P_6 \longrightarrow 0,
\end{align*}
concentrated in degree $0$, and
\begin{align*}
 T_3 &: \quad 0 \longrightarrow 
      P_2 \xrightarrow{\left[\begin{smallmatrix} -\sigma\\\beta\end{smallmatrix}\right]} 
      P_3 \oplus P_4 \longrightarrow 0, 
\end{align*}
concentrated in degrees $1$ and $0$.
Moreover, we set
\[
  T = T_1 \oplus T_2 \oplus T_3 \oplus T_4 \oplus T_5 \oplus T_6 .
\]

\begin{lemma}
\label{lem:5.1}
$T$ is a tilting complex in $K^b(P_{\Lambda})$.
\end{lemma}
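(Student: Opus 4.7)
The plan is to verify the two defining properties of a tilting complex for $T$: Hom-orthogonality and triangulated generation of $K^b(P_\Lambda)$.

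For the orthogonality $\Hom_{K^b(P_\Lambda)}(T, T[i]) = 0$ with $i \neq 0$, I would split the computation into three cases according to whether $T_3$ appears among the factors. First, if neither factor equals $T_3$, both are stalk complexes concentrated in degree $0$, so the Hom vanishes for every $i \neq 0$ by inspection of the degrees (no room for either a nonzero chain map or a nonzero homotopy). Second, if exactly one of the two equals $T_3$, then since $T_3$ lives in two adjacent degrees, only $i = \pm 1$ admits any candidate chain maps. For each such pair $(T_a, T_3[\pm 1])$ or $(T_3, T_b[\pm 1])$ I would enumerate the possible nonzero components as module maps between indecomposable projectives (that is, as paths in $\Lambda$) subject to the chain condition imposed by the differential $\bigl[\begin{smallmatrix}-\sigma\\\beta\end{smallmatrix}\bigr]$, and then exhibit an explicit null-homotopy. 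Third, for $\Hom(T_3, T_3[i])$ with $i \neq 0$, again only $i = \pm 1$ needs checking, and the same path-theoretic analysis yields null-homotopies.

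The key algebraic input throughout is the block of commutation relations of $\Lambda$ — among them $\sigma\varepsilon = \alpha\delta$, $\omega\beta = \mu\sigma$, $\beta\varrho = \gamma\nu$, $\alpha\nu = \sigma\varrho$, $\eta\gamma = \xi\alpha$, $\delta\eta = \nu\omega$, $\nu\mu = \delta\xi$, $\mu\alpha = \omega\gamma$, and $\varepsilon\xi = \varrho\mu$ — which allow us to rewrite any path through vertex $2$ using $\sigma$ or $\beta$ in terms of an alternative path avoiding these arrows. This is precisely what makes the candidate chain maps into or out of $T_3$ at nonzero shifts null-homotopic.

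For the generation condition, the stalks $T_1, T_2, T_4, T_5, T_6$ directly supply $P_1, P_5, P_3, P_4, P_6$ in $\add(T)$. The remaining projective $P_2$ is recovered via the distinguished triangle
\begin{equation*}
  P_2 \longrightarrow P_3 \oplus P_4 \longrightarrow T_3 \longrightarrow P_2[1]
\end{equation*}
in $K^b(P_\Lambda)$, since $T_3$ is by construction the mapping cone of $\bigl[\begin{smallmatrix}-\sigma\\\beta\end{smallmatrix}\bigr]\colon P_2 \to P_3 \oplus P_4$ up to the standard shift. As $P_3 \oplus P_4 = T_4 \oplus T_5$ and $T_3$ both lie in $\add(T)$, the third vertex $P_2$ of the triangle lies in the triangulated subcategory generated by $\add(T)$. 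Hence every indecomposable projective belongs to this subcategory, and therefore $\add(T)$ generates $K^b(P_\Lambda)$.

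The main obstacle I expect is the explicit construction of null-homotopies in the nonzero-shift cases: each candidate chain map is a combination of paths in $\Lambda$, and one must match each with a path-valued homotopy witnessing its vanishing. Pinpointing which commutation relation kills which chain map is the delicate combinatorial step, and one must additionally check that the deformed relations involving $\lambda$ and the socle-type relations $(\theta f(\theta) f^2(\theta))^{m-1}\theta f(\theta) g(f(\theta)) = 0$ do not produce stray candidate chain maps outside the recipe above.
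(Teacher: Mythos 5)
Your proposal is correct and is essentially the paper's argument: the orthogonality reduces to the shifts $\pm 1$ involving $T_3$, which the paper settles by the two facts that every homomorphism $P_2\to P_i$ with $i\neq 2$ factors through $\left[\begin{smallmatrix} -\sigma\\\beta\end{smallmatrix}\right]\colon P_2\to P_3\oplus P_4$ (because $\sigma$ and $\beta$ are the only arrows of $Q$ ending at vertex $2$) and that $\left[\begin{smallmatrix} -\sigma\\\beta\end{smallmatrix}\right]g\neq 0$ for every nonzero $g\colon P_i\to P_2$ with $i\neq 2$; your mapping-cone triangle for the generation condition is the standard argument the paper leaves implicit. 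The only caveat is that for $\Hom_{K^b(P_{\Lambda})}(T_r,T_3[-1])$ there are no nonzero homotopies available, so the task there is not to exhibit null-homotopies but to show that the chain-map condition $\left[\begin{smallmatrix} -\sigma\\\beta\end{smallmatrix}\right]g=0$ admits only $g=0$, which is exactly the second fact above.
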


\begin{proof}
It is sufficient to show the equalities
\begin{align*}
  \Hom_{K^b(P_{\Lambda})} (T_3, T_r[1]) &= 0 , & 
  \Hom_{K^b(P_{\Lambda})} (T_r, T_3[-1]) &= 0,  
\end{align*}
for $r \in \{1,2,\dots,6\}$.
The first equalities hold, 
because any nonzero homomorphism
$f : P_2 \to P_i$ with $i \neq 2$ factors
through 
$\left[\begin{smallmatrix} -\sigma\\\beta\end{smallmatrix}\right] : P_2 \to  P_3 \oplus P_4$.
The second equalities hold, because for any nonzero
$g : P_i \to P_2$ with $i \neq 2$, the composition 
$\left[\begin{smallmatrix} -\sigma\\\beta\end{smallmatrix}\right] g$
is nonzero.
\end{proof}

We define $R = R(m,\lambda) = \End_{K^b(P_{\Lambda})} (T)$,
and note that
$\tilde{P}_i = \Hom_{K^b(P_{\Lambda})} (T, T_i)$, $i \in \{1,2,\dots,6\}$,
form a complete family of pairwise non-isomorphic indecomposable
projective modules in $\mod R$.
We abbreviate $S = S(m,\lambda)$,
and use the ordering $\bar{P}_i = e_i S$, $i \in \{1,2,\dots,6\}$,
of the indecomposable projective modules in $\mod S$ 
corresponding to the numbering  of vertices of the quiver
$\Delta$ defining $S$.
In this notation, we have the following lemma.

\begin{lemma}
\label{lem:5.2}
The Cartan matrices $C_R$ and $C_S$  coincide.
In particular, the algebras $R$ and $S$ have the same dimension $36m+4$.
\end{lemma}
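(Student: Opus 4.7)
The plan is to compute every entry of $C_R$ by means of the alternating-sum formula of Proposition~\ref{prop:2.2}, express each entry in terms of the entries of $C_\Lambda$ supplied by Proposition~\ref{prop:3.2}, and then check by direct inspection that the resulting $6 \times 6$ matrix equals the matrix $C_S$ of Proposition~\ref{prop:4.2}. The hypothesis of Proposition~\ref{prop:2.2} is available throughout, because $T$ is a tilting complex by Lemma~\ref{lem:5.1}, and hence $\Hom_{K^b(P_{\Lambda})}(T_i, T_j[k]) = 0$ for every $i,j \in \{1,\dots,6\}$ and every $k \neq 0$.

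I would dispose of the easy block first. For any pair of indices $i,j \in \{1,2,4,5,6\}$ the complexes $T_i$ and $T_j$ are concentrated in degree $0$, so $\dim_K \Hom_{K^b(P_\Lambda)}(T_i,T_j)$ equals the corresponding entry of $C_\Lambda$ under the dictionary $T_1 \leftrightarrow P_1$, $T_2 \leftrightarrow P_5$, $T_4 \leftrightarrow P_3$, $T_5 \leftrightarrow P_4$, $T_6 \leftrightarrow P_6$. Reading these off from Proposition~\ref{prop:3.2} and comparing with $C_S$ in Proposition~\ref{prop:4.2} yields agreement on the $5 \times 5$ submatrix indexed by $\{1,2,4,5,6\}$. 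For the remaining pairs, which involve $T_3 : 0 \to P_2 \to P_3 \oplus P_4 \to 0$ in degrees $1$ and $0$, the alternating-sum formula gives
\begin{align*}
\dim_K \Hom(T_3, T_j) &= \dim_K \Hom_\Lambda(P_3 \oplus P_4, P_{\pi(j)}) - \dim_K \Hom_\Lambda(P_2, P_{\pi(j)}), \\
\dim_K \Hom(T_i, T_3) &= \dim_K \Hom_\Lambda(P_{\pi(i)}, P_3 \oplus P_4) - \dim_K \Hom_\Lambda(P_{\pi(i)}, P_2),
\end{align*}
for $i,j \neq 3$, where $\pi$ is the dictionary above, together with
\begin{align*}
\dim_K \Hom(T_3, T_3) &= \dim_K \Hom_\Lambda(P_3 \oplus P_4, P_3 \oplus P_4) - \dim_K \Hom_\Lambda(P_3 \oplus P_4, P_2) \\
&\quad{}- \dim_K \Hom_\Lambda(P_2, P_3 \oplus P_4) + \dim_K \Hom_\Lambda(P_2, P_2).
\end{align*}
Substituting the entries of $C_\Lambda$ from Proposition~\ref{prop:3.2}, each of these collapses to the corresponding entry of $C_S$; for example, $\dim_K \Hom(T_3,T_3) = 4m - 2m - 2m + (m+1) = m+1$, matching the $(3,3)$ entry of $C_S$.

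Finally, since $\dim_K R$ is the sum of the dimensions of the indecomposable projective $R$-modules $\tilde{P}_i$, it equals the sum of all entries of $C_R$; the equality $C_R = C_S$ then immediately gives $\dim_K R = \dim_K S = 36m+4$ by the tally already carried out in Proposition~\ref{prop:4.2}. The principal difficulty is purely organisational: eleven independent entries must be evaluated using the reindexing $\pi$, and the diagonal entry at the $T_3$ position combines four terms of $C_\Lambda$ with signs. No conceptual obstacle is present beyond this bookkeeping, and the clean, highly symmetric shape of $C_\Lambda$ from Proposition~\ref{prop:3.2} makes the verification mechanical.
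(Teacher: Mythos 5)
Your proposal is correct and follows exactly the paper's route: apply the alternating-sum formula of Proposition~\ref{prop:2.2} (its hypothesis being guaranteed by Lemma~\ref{lem:5.1}) together with the Cartan matrix of $\Lambda$ from Proposition~\ref{prop:3.2}, and compare entrywise with Proposition~\ref{prop:4.2}. Your general formulas for the entries involving $T_3$ and your sample computation $\dim_K \Hom(T_3,T_3)=4m-2m-2m+(m+1)=m+1$ are all correct, matching the paper's own illustrative computation of the first row.
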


\begin{proof}
This follows by the computation of the dimensions
$\dim_K \Hom_{K^b(P_{\Lambda})} (T_i, T_j)$, $i,j \in \{1,2,\dots,6\}$,
using Proposition~\ref{prop:2.2}
and the form of the Cartan matrix $C_{\Lambda}$ of $\Lambda$
presented in Proposition~\ref{prop:3.2}.
For example, the first row of $C_R$ is of the form
%$[m+1 \ \ m \ \ m+1 \ \ m \ \ m \ \ m \ \ m]$,
$[m+1 \quad m \quad m+1 \quad m \quad m \quad m \quad m]$,
because
\begin{align*}
  \dim_K \Hom_{K^b(P_{\Lambda})} (T_1, T_1) 
    &= \dim_K \Hom_{\Lambda} (P_1, P_1) = m+1, \\ 
  \dim_K \Hom_{K^b(P_{\Lambda})} (T_1, T_2) 
    &= \dim_K \Hom_{\Lambda} (P_1, P_5) = m, \\
  \dim_K \Hom_{K^b(P_{\Lambda})} (T_1, T_3) 
    &= \dim_K \Hom_{\Lambda} (P_1, P_3) 
        + \dim_K \Hom_{\Lambda} (P_1, P_4) 
 \\&\, \quad
        - \dim_K \Hom_{\Lambda} (P_1, P_2) 
 \\&
       = m + m - (m-1)
       = m+1, \\ 
  \dim_K \Hom_{K^b(P_{\Lambda})} (T_1, T_4) 
    &= \dim_K \Hom_{\Lambda} (P_1, P_3) = m, \\
  \dim_K \Hom_{K^b(P_{\Lambda})} (T_1, T_5) 
    &= \dim_K \Hom_{\Lambda} (P_1, P_4) = m, \\
  \dim_K \Hom_{K^b(P_{\Lambda})} (T_1, T_6) 
    &= \dim_K \Hom_{\Lambda} (P_1, P_6) = m.
\end{align*}
\end{proof}

We define now irreducible morphisms between the summands
of $T$ in $K^b(P_{\Lambda})$:
\begin{align*}
  \tilde{\alpha} &: T_2 \to T_1, \mbox{ given by } \delta : P_5 \to P_1, \\
  \tilde{\beta} &: T_3 \to T_2, \mbox{ given by } 
         [\xi\quad \eta + \lambda(\eta \gamma\delta)^{m-1} \eta] : P_3 \oplus P_4 \to P_5, \\
  \tilde{\gamma} &: T_4 \to T_3, \mbox{ given by } 
     \left[\begin{smallmatrix} 1\\0\end{smallmatrix}\right] : 
      P_3 \to  P_3 \oplus P_4, \\
  \tilde{\sigma} &: T_1 \to T_4, \mbox{ given by } \alpha : P_1 \to P_3, \\
  \tilde{\delta} &: T_1 \to T_5, \mbox{ given by } \gamma : P_1 \to P_4, \\
  \tilde{\varrho} &: T_6 \to T_1, \mbox{ given by } \nu : P_6 \to P_1, \\
  \tilde{\omega} &: T_3 \to T_6, \mbox{ given by } 
         [\mu\quad \omega] : P_3 \oplus P_4 \to P_6, \\
  \tilde{\nu} &: T_5 \to T_3, \mbox{ given by } 
     \left[\begin{smallmatrix} 0\\1\end{smallmatrix}\right] : 
      P_4 \to  P_3 \oplus P_4.
\end{align*}
We obtain then the irreducible homomorphisms between 
the indecomposable projective modules in $\mod R$
\begin{align*}
  \alpha &= \Hom_{K^b(P_{\Lambda})} (T, \tilde{\alpha}) : \tilde{P}_2 \to \tilde{P}_1 , & 
  \beta &= \Hom_{K^b(P_{\Lambda})} (T,  \tilde{\beta}) : \tilde{P}_3 \to \tilde{P}_2 , \\
  \gamma &= \Hom_{K^b(P_{\Lambda})} (T, \tilde{\gamma}) : \tilde{P}_4 \to \tilde{P}_3 , & 
  \sigma &= \Hom_{K^b(P_{\Lambda})} (T,  \tilde{\sigma}) : \tilde{P}_1 \to \tilde{P}_4 , \\
  \delta &= \Hom_{K^b(P_{\Lambda})} (T, \tilde{\delta}) : \tilde{P}_1 \to \tilde{P}_4 , & 
  \varrho &= \Hom_{K^b(P_{\Lambda})} (T,  \tilde{\varrho}) : \tilde{P}_6 \to \tilde{P}_1 , \\
  \omega &= \Hom_{K^b(P_{\Lambda})} (T, \tilde{\omega}) : \tilde{P}_3 \to \tilde{P}_6 , & 
  \nu &= \Hom_{K^b(P_{\Lambda})} (T,  \tilde{\nu}) : \tilde{P}_5 \to \tilde{P}_3 , 
\end{align*}
which are representatives of all irreducible homomorphisms
between the modules $\tilde{P}_i$, $i \in \{1,2,\dots,6\}$,
in $\mod R$.
This shows that the Gabriel quiver $Q_R$ of $R$ is the quiver
\[
%  \xymatrix@R=2pc@C=1.5pc{
%  \xymatrix@R=3.5pc@C=1.8pc{
  \xymatrix@R=3.pc@C=1.2pc{
%  \xymatrix@C=.8pc{
    &&& 1
    \ar[ld]^{\alpha}
    \ar[rrrd]^{\varrho}
    \\   
    5
    \ar[rrru]^{\delta}
%    \ar@<-.5ex>[rr]_(.6){\eta}
    && 2
%    \ar@<-.5ex>[ll]_(.4){\xi}
    \ar[rd]^{\beta}
    && 4
    \ar[lu]^{\sigma}
%    \ar@<-.5ex>[rr]_(.4){\mu}
    && 6
%    \ar@<-.5ex>[ll]_(.6){\varepsilon}
    \ar[llld]^{\omega}
    \\
   &&& 3
    \ar[lllu]^{\nu}
    \ar[ur]^{\gamma}
  }
\]
being the quiver $\Delta$ defining the algebra $S$.

\begin{theorem}
\label{th:5.3}
The algebras $R$ and $S$ are isomorphic.
\end{theorem}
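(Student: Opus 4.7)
The plan is to produce a surjective algebra homomorphism $\varphi\colon S \to R$; combined with the dimension equality $\dim_K S = \dim_K R = 36m+4$ from Lemma~\ref{lem:5.2}, this will automatically yield an isomorphism. The candidate for $\varphi$ is essentially forced by the discussion preceding the theorem: send the idempotents $e_i$ of $S$ to the idempotents of $R$ picking out the summands $\tilde{P}_i$, and send each arrow of the quiver $\Delta$ to the homomorphism in $R$ already labelled by the same Greek letter, namely the image under $\Hom_{K^b(P_{\Lambda})}(T,-)$ of the corresponding tilde-map $\tilde{\alpha},\tilde{\beta},\tilde{\gamma},\tilde{\sigma},\tilde{\delta},\tilde{\varrho},\tilde{\omega},\tilde{\nu}$. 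Since these were shown to represent all irreducible morphisms between the indecomposable projectives of $R$, once $\varphi$ is well-defined it is automatically surjective, and the dimension count finishes the argument.

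Well-definedness reduces to checking that the ten defining relations of $S$ are satisfied in $R$ after this substitution. Each such relation is verified by a direct computation of composites of chain maps in $K^b(P_{\Lambda})$. Because $T_1,T_2,T_4,T_5,T_6$ are stalk complexes in degree $0$ while $T_3\colon P_2\to P_3\oplus P_4$ sits in degrees $1$ and $0$, every composite $\tilde{\theta}_1\cdots\tilde{\theta}_r$ reduces to a product of arrows of $\Lambda$ on the degree-$0$ components, possibly after correction by a homotopy supported on the degree-$1$ summand $P_2$ of $T_3$. Under this dictionary, the four ``commutation'' relations of $S$ (for instance $\alpha\beta\nu=\varrho\omega\nu$ and $\delta\alpha\beta=\delta\varrho\omega$) translate into the twelve short relations of $\Lambda$ recalled in Section~\ref{sec:hta}, such as $\beta\varrho=\gamma\nu$ and $\sigma\varepsilon=\alpha\delta$, combined with the projection/inclusion behaviour of $\tilde{\gamma}$ and $\tilde{\nu}$ into $P_3\oplus P_4$ and of $\tilde{\beta}$, $\tilde{\omega}$ out of it.

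The heart of the verification is the four twisted relations such as $\beta\nu\delta=\beta\gamma\sigma+\lambda(\beta\gamma\sigma\alpha)^{m-1}\beta\gamma\sigma$. The factor $\lambda$ appears precisely because $\tilde{\beta}$ is defined with the deformed second component $\eta+\lambda(\eta\gamma\delta)^{m-1}\eta$: composing $\tilde{\beta}$ with $\tilde{\nu}$ isolates that component, and the twisted relation $\xi\sigma=\eta\beta+\lambda(\eta\gamma\delta)^{m-1}\eta\beta$ of $\Lambda$, together with its three cyclic analogues, produces exactly the required right-hand side after one further composition with $\tilde{\delta}$ or $\tilde{\alpha}$. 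The two vanishing relations $(\alpha\beta\gamma\sigma)^m\alpha=0$ and $(\gamma\sigma\alpha\beta)^m\gamma=0$ will follow from the top-degree relations $\bigl(\theta f(\theta) f^2(\theta)\bigr)^{m-1}\theta f(\theta)g(f(\theta))=0$ of $\Lambda$ applied to appropriately chosen arrows, possibly after using the auxiliary identities collected in Lemma~\ref{lem:4.1}.

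The main obstacle is bookkeeping rather than anything structural: one must carefully distinguish genuine morphisms of complexes from their homotopy classes, keep track of the two components of $\tilde{\beta}$ and of $\tilde{\omega}$ and of the two matrix inclusions $\tilde{\gamma}$ and $\tilde{\nu}$, and match the deformation term appearing in $\tilde{\beta}$ with the deformation term appearing in each of the four twisted relations of $S$. Once all ten relations are verified, $\varphi$ is a well-defined surjective algebra homomorphism between algebras of equal finite dimension, and hence an isomorphism.
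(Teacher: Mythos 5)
Your overall strategy is the same as the paper's (verify the relations of $S$ inside $R$, then use the Cartan--matrix/dimension count from Lemma~\ref{lem:5.2} to upgrade the resulting surjection to an isomorphism), but there is a genuine gap in the well-definedness step: the naive assignment sending each arrow of $\Delta$ to the same-named morphism $\Hom_{K^b(P_{\Lambda})}(T,\tilde{\theta})$ does \emph{not} satisfy the defining relations of $S$. When one actually computes the composites, the deformation terms land in the wrong places. Concretely, in $R$ one finds the \emph{untwisted} identities $\alpha\beta\gamma=\varrho\omega\gamma$ and $\sigma\alpha\beta=\sigma\varrho\omega$, whereas $S$ demands the twisted versions $\varrho\omega\gamma=\alpha\beta\gamma+\lambda(\alpha\beta\gamma\sigma)^{m-1}\alpha\beta\gamma$ and $\sigma\varrho\omega=\sigma\alpha\beta+\lambda(\sigma\alpha\beta\gamma)^{m-1}\sigma\alpha\beta$; dually, in $R$ one finds the \emph{twisted} identities
\[
\delta\alpha\beta=\delta\varrho\omega+\lambda(\delta\varrho\omega\nu)^{m-1}\delta\varrho\omega,
\qquad
\alpha\beta\nu=\varrho\omega\nu+\lambda(\varrho\omega\nu\delta)^{m-1}\varrho\omega\nu,
\]
whereas $S$ demands the untwisted $\delta\alpha\beta=\delta\varrho\omega$ and $\alpha\beta\nu=\varrho\omega\nu$. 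These correction terms are genuinely nonzero (they are paths of length $3m-1$, below the socle of $\Lambda$), so four of the eight length-three relations of $S$ fail for your map, and your claim that the commutation relations ``translate into the twelve short relations of $\Lambda$'' is where the argument breaks.

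The missing idea is a change of generators: the paper replaces $\varrho$ by $\varrho^*=\varrho+\lambda(\varrho\omega\nu\delta)^{m-1}\varrho$ (using that $\alpha\beta\gamma\sigma=\varrho\omega\nu\delta$ in $R$, which follows from the untwisted identities (1) and (4)). This single substitution simultaneously converts all four mismatched identities into the required relations of $S$ while preserving the remaining ones, e.g.\ $\gamma\sigma\varrho^*=\nu\delta\varrho^*$. Only after this adjustment does one obtain a well-defined surjection from $S$ onto $R$, and only then does the dimension count finish the proof. Everything else in your outline (the treatment of the two-term complex $T_3$, the role of the deformed component of $\tilde{\beta}$, the use of the socle relations of $\Lambda$ for the two vanishing relations) matches the paper's computation.
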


\begin{proof}
We first prove that the following identities  hold in $R$:
%\begin{enumerate}[(1)]
%%\setlength\itemsep{1em}
% \item
% \item
% \item
%\end{enumerate}
\begin{align}
 \label{eq:1}
  \alpha\beta\gamma &= \varrho\omega\gamma , \\
 \label{eq:2}
  \sigma\alpha\beta &= \sigma\varrho\omega , \\
 \label{eq:3}
  \gamma \sigma \varrho &=  \nu \delta \varrho , \\
 \label{eq:4}
  \omega \gamma \sigma &= \omega \nu \delta , \\
 \label{eq:5}
  \beta \nu \delta &= \beta \gamma \sigma
     + \lambda (\beta \gamma \sigma \alpha)^{m-1}  \beta \gamma \sigma , \\
 \label{eq:6}
  \nu \delta \alpha &= \gamma \sigma \alpha
     + \lambda (\gamma \sigma \alpha \beta)^{m-1} \gamma \sigma \alpha , \\
%  \sigma \varrho \omega &= \sigma \alpha \beta
%    + \lambda (\sigma \alpha \beta \gamma)^{m-1} \sigma \alpha \beta , \\
 \label{eq:7}
  \delta \alpha \beta &= \delta \varrho \omega
    + \lambda (\delta \varrho \omega \nu)^{m-1} \delta \varrho \omega , \\
%  \varrho \omega \gamma &= \alpha \beta \gamma
%    + \lambda (\alpha \beta \gamma \sigma)^{m-1} \alpha \beta \gamma , \\
 \label{eq:8}
  \alpha \beta \nu &= \varrho \omega \nu
    + \lambda (\varrho \omega \nu \delta)^{m-1} \varrho \omega \nu , \\
 \label{eq:9}
  (\alpha \beta \gamma \sigma)^m \alpha &= 0 , \\
 \label{eq:10}
  (\gamma \sigma \alpha \beta)^m \gamma &=  0 . 
\end{align}

For \eqref{eq:1}, it is enough to show that
$\tilde{\alpha}\tilde{\beta}\tilde{\gamma} = \tilde{\varrho}\tilde{\omega}\tilde{\gamma}$.
%in $K^b(P_{\Lambda})$.
We have 
$\tilde{\alpha}\tilde{\beta}\tilde{\gamma} = \delta \xi : P_3 \to P_1$
and
$\tilde{\varrho}\tilde{\omega}\tilde{\gamma} = \nu\mu : P_3 \to P_1$,
with $\delta \xi = \nu\mu$ in $\Lambda$, 
and so the required equality holds.

For \eqref{eq:2}, it is enough to show that
$\tilde{\sigma}\tilde{\alpha}\tilde{\beta} = \tilde{\sigma}\tilde{\varrho}\tilde{\omega}$.
%in $K^b(P_{\Lambda})$.
We have 
$\tilde{\sigma}\tilde{\alpha}\tilde{\beta}
 = [\alpha\delta\xi\quad
   \alpha\delta\eta + \lambda\alpha\delta(\eta \gamma\delta)^{m-1} \eta]
  : P_3 \oplus P_4 \to P_3$
and
$\tilde{\sigma}\tilde{\varrho}\tilde{\omega}
  = [\alpha\nu\mu, \alpha\nu\omega] : P_3 \oplus P_4 \to P_3$.
Moreover, we have in $\Lambda$ the equalities
\[
  \alpha\nu\omega = \sigma\varrho\omega
  = \sigma \varepsilon \eta  + \lambda \sigma(\varepsilon \xi \sigma)^{m-1} \varepsilon \eta
  = \alpha \delta \eta  + \lambda \alpha \delta (\xi \sigma \varepsilon)^{m-1} \eta
\]
and
$\xi \sigma \varepsilon = \xi \alpha \delta = \eta \gamma \delta$ in $\Lambda$.
Hence the required equality holds.

For \eqref{eq:3}, we prove that
$\tilde{\nu}\tilde{\delta}\tilde{\varrho} - \tilde{\gamma}\tilde{\sigma}\tilde{\varrho} = 0$
in $K^b(P_{\Lambda})$.
We have
\[
 \tilde{\gamma}\tilde{\sigma}\tilde{\varrho} =
   \left[\begin{smallmatrix} \alpha\nu\\0\end{smallmatrix}\right] : 
      P_6 \to  P_3 \oplus P_4
 \quad
 \mbox{ and }
 \quad
 \tilde{\nu}\tilde{\delta}\tilde{\varrho} =
   \left[\begin{smallmatrix} 0\\\gamma\nu\end{smallmatrix}\right] : 
      P_6 \to  P_3 \oplus P_4.
\]
Moreover, we have the following commutative diagram in $\mod \Lambda$
\[
%  \xymatrix@R=2pc@C=1.5pc{
%  \xymatrix@R=3.5pc@C=1.8pc{
%  \xymatrix@R=3.pc@C=1.2pc{
  \xymatrix@R=3.pc@C=4pc{
%  \xymatrix@C=.8pc{
    & P_6 \ar[ld]_{\varrho}
    \ar[d]^{\left[\begin{smallmatrix} -\alpha\nu\\\gamma\nu\end{smallmatrix}\right]}
    \\   
    P_2 
    \ar[r]^{\left[\begin{smallmatrix} -\sigma\\\beta\end{smallmatrix}\right]}
    & P_3 \oplus P_4
  }
\]
because $\alpha\nu = \sigma\varrho$ and $\gamma\nu = \beta\varrho$ in $\Lambda$.
This proves the claim.

For \eqref{eq:4}, we note that 
$\tilde{\omega}\tilde{\gamma}\tilde{\sigma} = \mu\alpha : P_1 \to P_6$
and
$\tilde{\omega}\tilde{\nu}\tilde{\delta} = \omega\gamma : P_1 \to P_6$,
with $\mu\alpha = \omega\gamma$.

For \eqref{eq:5}, we prove equality
$\tilde{\beta}\tilde{\nu}\tilde{\delta} = \tilde{\beta}\tilde{\gamma}\tilde{\sigma}
  + \lambda (\tilde{\beta}\tilde{\gamma}\tilde{\sigma}\tilde{\alpha})^{m-1}
   \tilde{\beta}\tilde{\gamma}\tilde{\sigma}$.
Observe that,
\begin{align*}
 \tilde{\beta}\tilde{\nu}\tilde{\delta} 
  &= \eta\gamma + \lambda(\eta\gamma\delta)^{m-1} \eta\gamma 
   : P_1 \to P_5 , \\
 \tilde{\beta}\tilde{\gamma}\tilde{\sigma}
  &= \xi\alpha = \eta\gamma 
   : P_1 \to P_5 , \\
 \lambda (\tilde{\beta}\tilde{\gamma}\tilde{\sigma}\tilde{\alpha})^{m-1}
   \tilde{\beta}\tilde{\gamma}\tilde{\sigma}
 &= \lambda (\xi\alpha\delta)^{m-1}\eta\gamma
 = \lambda (\eta\gamma\delta)^{m-1}\eta\gamma : P_1 \to P_5,
\end{align*}
and hence the required equality holds.

For \eqref{eq:6}, we prove that
$\tilde{\nu}\tilde{\delta}\tilde{\alpha} = \tilde{\gamma}\tilde{\sigma}\tilde{\alpha}
  + \lambda (\tilde{\gamma}\tilde{\sigma}\tilde{\alpha}\tilde{\beta})^{m-1}
   \tilde{\gamma}\tilde{\sigma}\tilde{\alpha}$
in $K^b(P_{\Lambda})$.
We first observe that
\begin{align*}
 \tilde{\nu}\tilde{\delta}\tilde{\alpha}
  &= \left[\begin{smallmatrix} 0\\\gamma\delta\end{smallmatrix}\right]
  = \left[\begin{smallmatrix}
        0\\\beta\varepsilon+\lambda(\beta\varrho\omega)^{m-1}\beta\varepsilon
     \end{smallmatrix}\right]
   : P_5 \to P_3 \oplus P_4 , \\
\tilde{\gamma}\tilde{\sigma}\tilde{\alpha}
  &= \left[\begin{smallmatrix} \alpha\delta\\0\end{smallmatrix}\right]
   : P_5 \to P_3 \oplus P_4 , \\
\tilde{\gamma}\tilde{\sigma}\tilde{\alpha}\tilde{\beta}
  &= \left[\begin{smallmatrix} 
     \alpha\delta\xi & 
     \alpha\delta\eta + \lambda \alpha \delta(\eta\gamma\delta)^{m-1}\eta \\ 
    0 & 0
    \end{smallmatrix}\right]
   : P_3 \oplus P_4 \to P_3 \oplus P_4 .
\end{align*}
Moreover, 
$\alpha\delta = \sigma\varepsilon$,
and hence
\[
  \lambda (\alpha\delta\xi)^{m-1}\alpha\delta 
  = \lambda(\sigma\varepsilon\xi)^{m-1}\sigma\varepsilon
  = \lambda\sigma(\varepsilon\xi\sigma)^{m-1}\varepsilon .
\]
But then
\[
  \tilde{\gamma}\tilde{\sigma}\tilde{\alpha}
  + \lambda (\tilde{\gamma}\tilde{\sigma}\tilde{\alpha}\tilde{\beta})^{m-1}
   \tilde{\gamma}\tilde{\sigma}\tilde{\alpha}
  = \left[\begin{smallmatrix}
        \sigma\varepsilon+\lambda\sigma(\varepsilon\xi\sigma)^{m-1}\varepsilon \\ 0
     \end{smallmatrix}\right]
   : P_5 \to P_3 \oplus P_4 , \\
\]
Further,
we have the following commutative diagram in $\mod \Lambda$
\[
%  \xymatrix@R=2pc@C=1.5pc{
%  \xymatrix@R=3.5pc@C=1.8pc{
%  \xymatrix@R=3.pc@C=1.2pc{
  \xymatrix@R=3.pc@C=4pc{
%  \xymatrix@C=.8pc{
    & P_5 \ar[ld]_{\varepsilon+\lambda(\varepsilon\xi\sigma)^{m-1}\varepsilon}
    \ar[d]^{\left[\begin{smallmatrix}
     -\sigma\varepsilon-\lambda\sigma(\varepsilon\xi\sigma)^{m-1}\varepsilon\\
     \beta\varepsilon+\lambda\beta(\varrho\omega\beta)^{m-1}\varepsilon
   \end{smallmatrix}\right]}
    \\   
    P_2 
    \ar[r]^{\left[\begin{smallmatrix} -\sigma\\\beta\end{smallmatrix}\right]}
    & P_3 \oplus P_4
  }
\]
because $\varrho\omega\beta = \varrho\mu\sigma = \varepsilon\xi\sigma$ in $\Lambda$.
This shows that
$\tilde{\nu}\tilde{\delta}\tilde{\alpha} - \tilde{\gamma}\tilde{\sigma}\tilde{\alpha}
  - \lambda (\tilde{\gamma}\tilde{\sigma}\tilde{\alpha}\tilde{\beta})^{m-1}
   \tilde{\gamma}\tilde{\sigma}\tilde{\alpha} = 0$
in $K^b(P_{\Lambda})$,
and hence the required equality holds.

For \eqref{eq:7}, we prove that
$\tilde{\delta}\tilde{\alpha}\tilde{\beta} = \tilde{\delta}\tilde{\varrho}\tilde{\omega}
  + \lambda (\tilde{\delta}\tilde{\varrho}\tilde{\omega}\tilde{\nu})^{m-1}
  \tilde{\delta}\tilde{\varrho}\tilde{\omega}$.
%in $K^b(P_{\Lambda})$.
We have
\begin{align*}
 \tilde{\delta}\tilde{\alpha}\tilde{\beta}
  &= [\gamma\delta\xi \quad \gamma\delta\eta
     + \lambda \gamma\delta (\eta \gamma\delta)^{m-1}\eta]
   : P_3 \oplus P_4 \to P_4, \\
 \tilde{\delta}\tilde{\varrho}\tilde{\omega}
  &= [\gamma\nu\mu \quad \gamma\nu\omega]
   : P_3 \oplus P_4 \to P_4 ,
\end{align*}
and $\delta\xi  = \nu\mu$, $\delta\eta = \nu\omega$ in $\Lambda$.
Furthermore,
%Moreover, 
%$\alpha\delta = \sigma\varepsilon$
%and hence
\[
  \tilde{\delta}\tilde{\varrho}\tilde{\omega}\tilde{\nu} 
   = \gamma\nu\omega
   = \gamma\delta\eta 
   : P_4 \to P_4 .
\]
Hence we obtain
\[
  \lambda (\tilde{\delta}\tilde{\varrho}\tilde{\omega}\tilde{\nu})^{m-1}
    \tilde{\delta}\tilde{\varrho}\tilde{\omega}
  = \big[\lambda (\gamma\delta\eta)^{m-1} \gamma\delta\xi \quad 
    \lambda (\gamma\delta\eta)^{m-1} \gamma\delta\eta\big]
   : P_3 \oplus P_4 \to P_4  .
\]
We note that
\[
  \lambda (\gamma\delta\eta)^{m-1} \gamma\delta\xi
  = \lambda \big(\gamma f(\gamma)f^2(\gamma)\big)^{m-1} \gamma f(\gamma)g\big(f(\gamma)\big)
  = 0 .
\]
Therefore, the required equality holds.

For \eqref{eq:8}, we have to show that
$\tilde{\alpha}\tilde{\beta}\tilde{\nu}  = \tilde{\varrho}\tilde{\omega}\tilde{\nu} 
  + \lambda (\tilde{\varrho}\tilde{\omega}\tilde{\nu}\tilde{\delta})^{m-1}
  \tilde{\varrho}\tilde{\omega}\tilde{\nu} $.
% in $K^b(P_{\Lambda})$.
We have
\begin{align*}
 \tilde{\alpha}\tilde{\beta}\tilde{\nu}
  &= \delta\eta + \lambda \delta (\eta \gamma\delta)^{m-1}\eta
   = \delta\eta + \lambda (\delta \eta \gamma)^{m-1}\delta\eta
   : P_4 \to P_1, \\
 \tilde{\varrho}\tilde{\omega}\tilde{\nu}
  &= \nu\omega = \delta \eta
   : P_4 \to P_1 , \\
 \tilde{\varrho}\tilde{\omega}\tilde{\nu}\tilde{\delta}
  &= \delta \nu\omega = \delta \eta \gamma
   : P_1 \to P_1 ,
\end{align*}
and then
\[
  \lambda (\tilde{\varrho}\tilde{\omega}\tilde{\nu}\tilde{\delta})^{m-1}
    \tilde{\varrho}\tilde{\omega}\tilde{\nu}
  = \lambda (\delta\eta\gamma)^{m-1} \delta\eta
   : P_4 \to P_1  .
\]
Hence the required equality holds.

For \eqref{eq:9}, we observe that
\[
  (\tilde{\alpha}\tilde{\beta}\tilde{\gamma}\tilde{\sigma})^{m} \tilde{\alpha}
  = ({\delta}{\xi}{\alpha})^{m} {\delta}
     : P_5 \to P_1  ,
\]
and this is zero,  because in $\Lambda$, the element
$({\delta}{\xi}{\alpha})^{m}$
belongs to the socle of $P_1$
(see \cite[Lemma~4.2]{ES3}).

For \eqref{eq:10}, we observe that
$\tilde{\sigma}\tilde{\alpha}\tilde{\beta}\tilde{\gamma} = \alpha\delta\xi: P_3\to P_3$, and therefore
\[
  (\tilde{\gamma}\tilde{\sigma}\tilde{\alpha}\tilde{\beta})^{m} \tilde{\gamma}
  = \left[\begin{smallmatrix}
        (\alpha\delta \xi)^m \\ 0
     \end{smallmatrix}\right]
  = \left[\begin{smallmatrix}
        (\sigma\varepsilon\xi)^{m} \\ 0
     \end{smallmatrix}\right]
   : P_3 \to P_3 \oplus P_4 , 
\]
Then,
we have in $\mod \Lambda$ the 
%following 
commutative diagram
\[
%  \xymatrix@R=2pc@C=1.5pc{
%  \xymatrix@R=3.5pc@C=1.8pc{
%  \xymatrix@R=3.pc@C=1.2pc{
  \xymatrix@R=3.pc@C=4pc{
%  \xymatrix@C=.8pc{
    & P_3 \ar[ld]_{-\varepsilon\xi(\sigma\varepsilon\xi)^{m-1}}
    \ar[d]^{\left[\begin{smallmatrix} (\sigma\varepsilon\xi)^{m}\\0\end{smallmatrix}\right]}
    \\   
    P_2 
    \ar[r]^{\left[\begin{smallmatrix} -\sigma\\\beta\end{smallmatrix}\right]}
    & P_3 \oplus P_4
  }
\]
because $\beta\varepsilon\xi(\sigma\varepsilon\xi)^{m-1}$
is the path of length $3m$ in $Q$ from $4$ to $3$,
and hence the zero path in $\Lambda$,
by \cite[Lemma~4.5]{ES3}.
Therefore,
$(\tilde{\gamma}\tilde{\sigma}\tilde{\alpha}\tilde{\beta})^{m} \tilde{\gamma} = 0$
in $K^b(P_{\Lambda})$, 
and  equality \eqref{eq:10} holds.

We also observe that, in $R$, we have  by (1) and (4) that
$\alpha\beta\gamma\sigma = \varrho\omega\gamma\sigma = \varrho\omega\nu\delta$.

\medskip

To obtain the defining relations for $S$, we
replace  $\varrho$ by 
$\varrho^* = \varrho + \lambda(\alpha\beta\gamma\sigma)^{m-1} \varrho
  = \varrho + \lambda(\varrho \omega \nu \delta)^{m-1} \varrho$.
Then identities  
\eqref{eq:1},
\eqref{eq:2},
\eqref{eq:3},
\eqref{eq:7},
\eqref{eq:8}
are replaced by the following identities:
\begin{align}
 \label{eq:1p}
 \tag{1*}
% \tag{$1^*$}
  \varrho^*\omega\gamma &= \varrho\omega\gamma  
 + \lambda(\alpha\beta\gamma\sigma)^{m-1} \varrho\omega\gamma
  = \alpha\beta\gamma + \lambda(\alpha\beta\gamma\sigma)^{m-1}  \alpha\beta\gamma, \\
%  \alpha\beta\gamma &= \varrho\omega\gamma , \\
 \label{eq:2p}
 \tag{2*}
% \tag{$2^*$}
  \sigma\varrho^*\omega &= \sigma\varrho\omega 
   + \lambda\sigma(\alpha\beta\gamma\sigma)^{m-1} \varrho \omega 
  = \sigma\alpha\beta + \lambda(\sigma\alpha\beta\gamma)^{m-1}\sigma \alpha\beta, \\  %\sigma\alpha\beta &= \sigma\varrho\omega , \\
 \label{eq:3p}
 \tag{3*}
% \tag{$3^*$}
	\gamma\sigma\varrho^* &= 
 \gamma\sigma\varrho + \lambda(\gamma\sigma(\varrho\omega\nu\delta)^{m-1}\varrho)  
	= \gamma\sigma\varrho + \lambda(\gamma\sigma\varrho)(\omega\nu\sigma\varrho)^{m-1}\\&\nonumber 
	= \nu\delta\varrho + \lambda(\nu\delta(\varrho\omega\nu\delta)^{m-1}\varrho) 
	= \nu\delta\varrho + \lambda(\nu\delta\varrho)(\omega\nu\sigma\varrho)^{m-1} = \nu\delta\varrho^*, \\
	\tag{7*}
	% \tag{$7^*$}
 \delta \varrho^* \omega
 &= \delta \big(\varrho
    + \lambda (\varrho \omega \nu \delta)^{m-1} \varrho \big) \omega
   = \delta \varrho \omega
    + \lambda \delta (\varrho \omega \nu \delta)^{m-1} \varrho \omega
 \\&
\nonumber   = \delta \varrho \omega
    + \lambda (\delta \varrho \omega \nu)^{m-1} \delta \varrho \omega
   = \delta \alpha \beta ,
\\
 \label{eq:8p}
 \tag{8*}
% \tag{$8^*$}
  \varrho^* \omega \nu & 
    = \big(\varrho +  \lambda (\varrho \omega \nu \delta)^{m-1} \varrho\big) \omega \nu
    = \varrho \omega \nu
        + \lambda (\varrho \omega \nu \delta)^{m-1} \varrho \omega \nu
    = \alpha \beta \nu .
\end{align}

Therefore, after  replacing $\varrho$ by $\varrho^*$,
the relations defining the algebra $S = S(m,\lambda)$ are satisfied.
Then, applying 
Lemma~\ref{lem:5.2},
we conclude that algebras  $R$ and $S$ are isomorphic.
\end{proof}

Summing up, Theorem~\ref{th:main3} follows from
Theorems \ref{th:2.1} and \ref{th:5.3}.

%
%\section*{Acknowledgements}
%
%Both authors thank the program "Research in Pairs" of MFO Oberwolfach, and 
%as well the Faculty of Mathematics 
%and Computer Science of the 
%Nicolaus Copernicus University in Toru\'{n} for support. 

%\section*{References}

%\begin{thebibliography}{99}

\end{document}